\numberwithin{equation}{section}
\newtheorem{thm}{Theorem}[section]
\newtheorem{lem}[thm]{Lemma}
\numberwithin{equation}{section}
\theoremstyle{remark}
\newcommand{\R}{\mathbb{R}}
\def\rr{{\mathbb R}}
\def\zz{{\mathbb Z}}
\def\bint{{\ifinner\rlap{\bf\kern.35em--}
\int\else\rlap{\bf\kern.45em--}\int\fi}\ignorespaces}
\def\bbint{{\ifinner\rlap{\bf\kern.35em--}
\hspace{0.078cm}\int\else\rlap{\bf\kern.45em--}\int\fi}\ignorespaces}
\def\bint{{\ifinner\rlap{\bf\kern.35em--}
\int\else\rlap{\bf\kern.45em--}\int\fi}\ignorespaces}
\begin{document}

\title[Weighted fractional Sobolev-Poincar\'e inequalities in irregular domains]
{Weighted fractional Sobolev-Poincare inequalities in irregular domains}
\author{Yi Xuan\textsuperscript{1,2}}
\address{$1.$ HLM, Academy of Mathematics and Systems Science,
Chinese Academy of Sciences, Beijing, 100190, People's Republic of China}
\address{$2.$ School of
Mathematical Sciences, University of Chinese Academy of Sciences,
Beijing, 100049, People's Republic of China}
\email{xuanyi@amss.ac.cn}

\keywords{$s$-John domain, $\beta$-H\"older domain, Fractional Sobolev-Poincar\'e inequality, Capactity estimate, Weight}
\date{\today}


\begin{abstract}
In this paper, we study weighted fractional Sobolev-Poincar\'e inequalities for irregular domains. The weights considered here are distances to the boundary to certain powers, and the domains are the so-called $s$-John domains and $\beta$-H\"older domains. Our main results extend that of Hajlasz-Koskela [J. Lond. Math. Soc. 1998] from the classical weighted Sobolev-Poincar\'e inequality to its fractional counter-part and Guo [Chin. Ann. Math. 2017] from the frational Sobolev-Poincar\'e inequality to its weighted case.
\end{abstract}


\maketitle

\section{Introduction}

The classical Sobolev-Poincar\'e inequality asserts that if $u$ is a smooth function on $\Omega\subset \R^n$ and $\Omega$ is a bounded Lipschitz domain, then for all $p\in [1,n)$,
\begin{equation}\label{eq:class Sob-Poin}
	\Big(\int_{\Omega}|u-u_\Omega|^{\frac{np}{n-p}}dx \Big)^{\frac{n-p}{np}}\leq C\Big(\int_{\Omega}|\nabla u|^pdx \Big)^{\frac{1}{p}},
\end{equation}
where $u_{\Omega}=|\Omega|^{-1}\int_{\Omega}udx$ is the integral average of $u$ in $\Omega$. This inequality is closely related to the Rellich-Kondrachov compactness embedding theorem, which has important applications in the theory of partial differential equations, see for instance \cite{B2011,M2011}. The validity of \eqref{eq:class Sob-Poin} in irregular domains has also gained a lot of interest since the 1990s, partially because of its application in variational problems, see \cite{SS1990,BK1998,HK1998,HK2000,KM2000} and the references therein. In particular, weighted versions of \eqref{eq:class Sob-Poin} have been established in \cite{HK1998} together with applications in the compact embedding problem. One of the main technical innovation in \cite{HK1998} is to show the validity of (weighted) Sobolev-Poincar\'e inequality is equivalent with certain capacity estimates. This useful  observation was already used in the earlier works of Maz'ya \cite{M1960,M1973}, but only for bounded domains with the cone condition. In \cite{HK1998}, similar estimates were extended to the more general class of $s$-John domains.

Recall that a bounded domain $\Omega$ in $\rr^n$ ($n\geq 2$) is called an $s$-John domain ($s\geq 1$) if there exists a constant $C$ and a distinguished point $x_0$ satisfying that, for each $x\in\Omega$ there is an arc-length parametrization rectifiable curve $\gamma:[0,\ell(\gamma)]\rightarrow \Omega$, where $\ell(\gamma)$ is the length of $\gamma$, with $\gamma(0)=x$ and $\gamma(\ell(\gamma))=x_0$ such that for all $0\leq t\leq\ell(\gamma)$, it holds
\[
Cd(\gamma(t),\partial\Omega)\geq t^s.
\]
In the case $s=1$, this concept was first used by F. John in his work on elasticity \cite{J1961} and the term was introduced by Martin and Sarvas\cite{MS1979}. For general $s\geq 1$, it was introduced by Smith and Stegenga \cite{SS1990}; see also \cite{GK14} for further extensions of this class of domains and its connection with geometric function theory.


Recently, there has been growing interest in the study of the fractional Sobolev-Poincar\'e inequality; see for instance \cite{DIV2015,HSV2013,G2017,DD2022}. More precisely, the following fractional $(p,q)$-Sobolev-Poincar\'e inequality in a domain $\Omega\subset\rr^n$ ($n\geq2$) with finite Lebesgue measure were largely considered in the literature:
\begin{equation}\label{eq:fract Sob Poin}
\int_{\Omega}{|u(x)-u_{\Omega}|}^qdx\leq C\Big(\int_{\Omega}\int_{\Omega\cap B(x,\tau d(x,\partial \Omega))} \frac{{|u(x)-u(y)|}^p}{{|x-y|}^{n+p\delta}}dydx\Big)^{\frac{q}{p}},
\end{equation}
where $1\leq p\leq q<\infty$, $\delta\in (0,1)$, $\tau\in(0,\infty)$ and the constant $C$ is independent of $u\in C(\Omega)$. To simplify our notation, for each $u\in C(\Omega)$, $\delta\in (0,1)$, $\tau\in(0,\infty)$, we define a function $g_u^{\tau}$ on $\Omega$ as
\[
g_u^\tau(x)=\int_{\Omega\cap B(x,\tau d(x,\partial\Omega))}\frac{{|u(x)-u(y)|}^p}{{|x-y|}^{n+p\delta}}dy.
\]
Based on the idea of Hajlasz-Koskela \cite{HK1998}, in \cite{G2017}, an essentially sharp version of \eqref{eq:fract Sob Poin} was shown to hold in the class of $s$-John domains.

The aim of present paper is to give a further extension of the main results of \cite{G2017} to the weighted case, similar as in \cite{HK1998}. To be more precise, let $f,g$ be two positive continuous functions defined on an open set $\Omega\subset \R^n$ with $\int_{\Omega}f(x)dx<\infty$. In this paper, we are interested in the following weighted version of \eqref{eq:fract Sob Poin}:
\begin{equation}\label{eq:weighted fractional Sob Poin}
	{\Big(\int_\Omega{|u(x)-u_{\Omega,f}|}^{q}f(x)dx\Big)}^{\frac{1}{q}}\leq C {\Big(\int_\Omega g_u^\tau(x)g(x)dx\Big)}^{\frac{1}{p}},
\end{equation}
where $u_{\Omega,f}$ is the $f$-average of $u$ on $\Omega$ defined by
\[u_{\Omega,f}=\frac{\int_\Omega u(x)f(x)dx}{\int_\Omega f(x)dx}.\]
When $f\equiv g\equiv 1$, \eqref{eq:weighted fractional Sob Poin} reduces to \eqref{eq:fract Sob Poin}.



The general idea towards \eqref{eq:weighted fractional Sob Poin} is similar to \cite{HK1998} and \cite{G2017}. The starting point is to reduces the validity of the weighted fractional Sobolev-Poincar\'e inequality \eqref{eq:weighted fractional Sob Poin} to certain weighted capacity estimates. To formulate our main result, let us first recall that a set $A$ is called an admissible subset of an open set $\Omega\subset \R^n$ if $A$ is an open set and $\partial A\cap\Omega$ is a smooth submanifold. Our main result of this paper reads as follows.

\begin{thm}\label{thm:capactity characterization}
Let $f,g$ be two positive continuous functions on an open set $\Omega\subset \R^n$ with $\int_{\Omega}f(x)dx<\infty$. Let $1\leq p \leq q<\infty$, $\delta\in (0,1)$ and $\tau\in(0,\infty)$. Assume that for any ball $B\subset\subset\Omega$, there exists a constant  $C=C(\Omega,B,f,g,p,q,\delta,\tau)$ such that
\[
\Big(\int_Af(x)dx\Big)^{\frac{p}{q}}\leq C \inf_{u}\int_\Omega g_u^\tau(x)g(x)dx,
\]
for any admissible set $A\subset\Omega$ with $A\cap B=\emptyset$, where the infimum is taken with respect to all the functions $u\in C(\Omega)$, satisfying that $u|_A\geq 1$ and $u|_B=0$. Then there exists a constants $\tau_0=\tau(\Omega)$ such that for each $\tau\in (0,\tau_0)$ and each $u\in C(\Omega)$, the weighted fractional Sobolev-Poincar\'e inequality \eqref{eq:weighted fractional Sob Poin} holds.
\end{thm}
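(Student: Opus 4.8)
The plan is to follow the Maz'ya--Hajlasz--Koskela strategy adapted to the fractional setting, passing from the capacity estimate to the inequality via a good-$\lambda$ / truncation argument on level sets of $u$. First I would fix $u\in C(\Omega)$, and by replacing $u$ with $u-u_{\Omega,f}$ (which only changes $g_u^\tau$ by nothing, since it is translation invariant) and splitting into positive and negative parts, reduce to controlling $\int_{\Omega}(u_+)^q f\,dx$ in terms of $\int_{\Omega}g_u^\tau g\,dx$, where we may assume the median of $u$ with respect to $f\,dx$ is $0$, so that the set $\{u\le 0\}$ has $f$-measure at least $\tfrac12\int_\Omega f$. Then I would pick a fixed ball $B\subset\subset\Omega$ inside $\{u<0\}$ — here one needs $u$ to be genuinely negative on an open set, which holds after an arbitrarily small perturbation of the threshold; a limiting argument recovers the general case — so that $B$ is available as the ``grounded'' set in the hypothesis.

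The core step is the dyadic decomposition of the level sets. For $j\in\Z$ set $A_j=\{x\in\Omega: u(x)>2^j\}$ and define the truncated test functions
\[
v_j=\min\Big\{1,\ \max\{0,\ 2^{-j}u-1\}\Big\},
\]
so that $v_j=0$ on $\Omega\setminus A_j\supset B$, $v_j=1$ on $A_{j+1}$, and $v_j$ is admissible (up to smoothing $\partial A_j\cap\Omega$, or using that the capacity hypothesis is stated with an inf over all admissible $A$ and all competitors). Applying the hypothesis with $A=A_{j+1}$ and $u=v_j$ gives
\[
\Big(\int_{A_{j+1}}f\,dx\Big)^{p/q}\le C\int_{\Omega}g_{v_j}^\tau(x)\,g(x)\,dx.
\]
The key pointwise observation is that $v_j$ differs from $0$ or $1$ only on the ``annulus'' $A_j\setminus A_{j+1}$, hence $|v_j(x)-v_j(y)|\le 2^{-j}|u(x)-u(y)|$ and moreover $|v_j(x)-v_j(y)|$ is supported (in $x$) essentially on $A_j\setminus A_{j+1}$; combined with the fact that the regions $A_j\setminus A_{j+1}$ have bounded overlap as $j$ varies, one gets
\[
\sum_{j\in\Z} g_{v_j}^\tau(x)\ \lesssim\ \sum_{j}2^{-jp}\,\mathbf{1}_{A_j\setminus A_{j+1}}(x)\, \big(\text{local fractional energy of }u\text{ at }x\big)\ \lesssim\ |u(x)|^{-p}\, g_u^\tau(x)\cdot(\text{bounded factor}),
\]
after using $2^{-j}\simeq |u(x)|^{-1}$ on $A_j\setminus A_{j+1}$. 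Summing the capacity inequalities over $j$, using $\|\cdot\|_{\ell^{p/q}}\le\|\cdot\|_{\ell^1}$ (valid since $p\le q$ gives $p/q\le 1$) on the left, and the overlap bound on the right, yields
\[
\Big(\sum_j \big(2^{(j+1)q}\,\tilde f(A_{j+1})\big)^{p/q}\Big)^{1/p}\lesssim \Big(\int_\Omega g_u^\tau(x)g(x)\,dx\Big)^{1/p},
\]
and the left-hand side dominates $\big(\int_\Omega (u_+)^q f\,dx\big)^{1/q}$ by the standard layer-cake comparison $\int (u_+)^q f\,dx\simeq \sum_j 2^{jq} f(A_j)$.

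I expect the main obstacle to be the bounded-overlap / pointwise energy bound in the fractional setting: unlike the gradient case where $|\nabla v_j|$ is literally $2^{-j}|\nabla u|\mathbf 1_{A_j\setminus A_{j+1}}$, here $g_{v_j}^\tau(x)$ involves a double integral, and when $x\in A_j\setminus A_{j+1}$ the inner variable $y\in B(x,\tau d(x,\partial\Omega))$ may lie in $A_k$ for many different $k$, so the contributions are not cleanly separated. The remedy is to split the $y$-integral according to which dyadic band $u(y)$ falls in and use that $|v_j(x)-v_j(y)|\le 2^{-j}\min\{|u(x)-u(y)|,\ |u(x)|\}$ together with a summation-by-parts in $j$; the smallness condition $\tau<\tau_0(\Omega)$ and the localization of the nonlocal energy to balls $B(x,\tau d(x,\partial\Omega))$ are exactly what keep this sum under control (they ensure the competitor $v_j$ does not ``see'' the boundary and that the overlap of the balls is geometrically bounded). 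A secondary, more routine point is the admissibility/smoothness of the sets $A_j$: this is handled by approximating $u$ by smooth functions or by choosing the thresholds $2^j$ outside a measure-zero exceptional set via Sard's theorem, and then passing to the limit using continuity of $u$ and Fatou's lemma, exactly as in \cite{HK1998} and \cite{G2017}.
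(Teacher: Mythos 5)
Your dyadic truncation $v_j=\min\{1,\max\{0,2^{-j}u-1\}\}$ and level-set decomposition is essentially the paper's Theorem~\ref{4.1l}: note that $v_j=2^{-j}u_j$ where $u_j=\min\{2^j,\max\{0,u-2^j\}\}$ is the competitor used there. The device you correctly anticipate will be needed --- splitting the inner $y$-integral according to the dyadic band of $u(y)$ --- is exactly what the paper carries out via the decomposition $\sum_k\int g_{u_k}g=\sum_k(I_1^k+I_2^k)$, with $I_1^k$, $I_2^k$ collecting pairs $(y,z)\in A_i\times A_j$ for $i\leq k\leq j$ and $j\leq k\leq i$ respectively, and the pointwise bound $|u_k(y)-u_k(z)|\leq 4\cdot 2^{k-j}|u(y)-u(z)|$. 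The resulting double sum is geometrically convergent. So that part of your outline, though sketchy, is on target.

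The genuine gap is your reduction from $u-u_{\Omega,f}$ to the grounded case. You propose to normalize the $f$-median to $0$ and then choose a ball $B\subset\{u<0\}$. But the capacity hypothesis gives a constant $C=C(\Omega,B,\ldots)$ that depends on the ball $B$, and your $B$ depends on $u$; nothing prevents $\{u<0\}$ from being a thin set hugging $\partial\Omega$ that contains only tiny balls with small $d(B,\partial\Omega)$, so the constant you extract is not uniform in $u$. Your ``perturbation + limiting argument'' only addresses nondegeneracy of the level set, not this uniformity issue. The paper avoids this entirely in Theorem~\ref{4.3l}: one fixes a single ball $B_0$ with $3B_0\subset\subset\Omega\subset kB_0$, takes a cutoff $\phi$ supported in $2B_0$ with $\phi|_{B_0}=1$, and writes $v-v_{3B_0}=\phi(v-v_{3B_0})+(1-\phi)(v-v_{3B_0})=:v_1+v_2$. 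The local piece $v_1$ is handled by the fractional Poincar\'e inequality on the ball (Lemma~\ref{ddd}); the global piece $v_2$ vanishes on $B_0$, so the grounded inequality of Theorem~\ref{4.1l} applies, and the cross terms produced by the product rule are again absorbed by Lemma~\ref{ddd}. This is also where the restriction $\tau<\tau_0(\Omega)$ actually enters: it guarantees that for $x\in\Omega\setminus 3B_0$ the ball $B(x,\tau\rho(x))$ misses $2B_0$, so $g_{v_2}^\tau=g_v^\tau$ outside $3B_0$. Your claim that the smallness of $\tau$ is what keeps the truncation sum under control is a misattribution: Theorem~\ref{4.1l} is valid for all $\tau\in(0,\infty)$, and $\tau_0$ is needed only in the cutoff step.
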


Theorem \ref{thm:capactity characterization} can be regarded as a weighted fractional version of \cite[Theorem 1]{HK1998} and \cite[Theorem 1.1]{G2017} and it allows us to study the weighted fractional Sobolev-Poincar\'e inequality \eqref{eq:weighted fractional Sob Poin} in irregular domains via capacity estiamtes. The techniques for doing capacity estiamtes in $s$-John domians are now well developed; see for instance \cite{HK2000,G2015}.

Similar as in \cite{HK1998}, for a point $x\in\Omega$, we set $\rho(x)=d(x,\partial \Omega)$. Our second main result can be regarded as a weighted version of \cite[Theorem 1.2]{G2017}.


%

\begin{thm}\label{4.4t}
Let $\Omega\subset \R^n$ be an $s$-John domain. Assume that $\delta\in(0,1)$, $1\leq p\leq q$, $a\geq 0$ and $b>p\delta-n$ such that $q<\frac{(n+a)p}{s(n+b-\delta p)+(s-1)(p-1)}$. Then there exists a constants $\tau_0=\tau(\Omega)$ such that for each $\tau\in (0,\tau_0)$ and each $u\in C(\Omega)$, the weighted fractional Sobolev-Poincar\'e inequality \eqref{eq:weighted fractional Sob Poin} holds with $f=\rho^a$ and $g=\rho^b$.
\end{thm}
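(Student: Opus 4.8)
The plan is to deduce Theorem~\ref{4.4t} from Theorem~\ref{thm:capactity characterization} by verifying the weighted capacity estimate for the specific weights $f=\rho^a$ and $g=\rho^b$ in an $s$-John domain. So the whole argument reduces to proving that for every ball $B\subset\subset\Omega$ there is a constant $C$ such that, for every admissible $A\subset\Omega$ with $A\cap B=\emptyset$ and every $u\in C(\Omega)$ with $u|_A\geq 1$, $u|_B=0$, one has
\begin{equation}\label{eq:goal cap}
\Big(\int_A\rho(x)^a\,dx\Big)^{p/q}\leq C\int_\Omega g_u^\tau(x)\,\rho(x)^b\,dx.
\end{equation}
First I would fix the distinguished John point $x_0$ and (after possibly shrinking $B$ and enlarging the constant) assume $x_0\in B$; the general case follows since any $B$ is connected to $x_0$ by a fixed chain of balls on which $u$ has controlled oscillation, exactly as in \cite{HK1998,G2017}.

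The core of the proof is a chaining/Whitney-decomposition argument along John curves. I would take a Whitney decomposition $\{Q_j\}$ of $\Omega$, and for each Whitney cube $Q$ pick a chain of Whitney cubes $Q=S_0,S_1,\dots,S_N\ni x_0$ following the John curve from (a point of) $Q$ to $x_0$, with the standard properties: consecutive cubes are neighbors, $\ell(S_i)\approx\dist(S_i,\partial\Omega)$, the chain has bounded overlap in the sense that each cube appears in at most boundedly many chains of cubes of comparable or larger size, and the "shadow" estimate $\sum_{S_i\ni \text{fixed cube}}\ell(Q)^{\text{(something)}}$ is controlled by the $s$-John condition — quantitatively $\ell(S_i)\gtrsim \dist(S_i,\partial\Omega)\gtrsim (\text{arclength to }x_0)^s$, which converts sums over chains into convergent geometric-type sums with exponents governed by $s$. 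For a function $u$ with $u|_B=0$, $u|_A\geq1$, and $A\cap B=\emptyset$, telescoping along the chain gives a pointwise bound $1\le |u(x)-u_{Q}|+\sum_i |u_{S_i}-u_{S_{i+1}}|$ on $A\cap Q$, and each consecutive difference $|u_{S_i}-u_{S_{i+1}}|$ is estimated by a local fractional oscillation $\big(\bint_{S_i^*}\bint_{S_i^*}|u(x)-u(y)|^p|x-y|^{-n-p\delta}\,dx\,dy\big)^{1/p}\ell(S_i)^{\delta}\lesssim \ell(S_i)^{\delta-n/p}\big(\int_{S_i^*} g_u^\tau\big)^{1/p}$ — here one uses that $\tau<\tau_0$ is small enough that $B(x,\tau\rho(x))$ covers a fixed dilate $S_i^*$ whenever $x\in S_i$, so the local double integral is dominated by $\int_{S_i^*}g_u^\tau$.

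Next I would integrate against $\rho^a$ over $A$, split $A=\bigcup_j (A\cap Q_j)$, and on each piece use $\rho\approx\ell(Q_j)$ together with $\int_{A\cap Q_j}\rho^a\le \ell(Q_j)^{a+n}$ if $|A\cap Q_j|$ were full, or more honestly carry $|A\cap Q_j|$ through. The telescoped bound then yields, after Minkowski/H\"older in $\ell^{q}$ versus $\ell^{p}$ and the bounded-overlap properties of the chains, an estimate of the shape
\[
\Big(\int_A\rho^a\Big)^{p/q}\lesssim \Big(\sum_{Q}\big(\text{weight of }Q\big)\Big)^{?}\cdot \sum_{S}\ell(S)^{(\delta-n/p)p}\,\ell(S)^{?}\int_{S^*}g_u^\tau,
\]
and here is where the algebra must be arranged so that the power of $\ell(S)$ multiplying $\int_{S^*}g_u^\tau$ is exactly $b$ (to reconstruct $\int_\Omega g_u^\tau\rho^b$) while all the \emph{other} powers of $\ell$ — coming from the $\rho^a$ weight on the target side, from the $n/p$ Sobolev loss, from the $\delta$ gain, from converting length-to-$x_0$ sums via the $s$-John bound (which contributes the factors $s(n+b-\delta p)$ and $(s-1)(p-1)$), and from the $\ell^p\hookrightarrow\ell^q$ gain — combine into a convergent sum. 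The requirement that this aggregate exponent be strictly positive is precisely the hypothesis $q<\frac{(n+a)p}{s(n+b-\delta p)+(s-1)(p-1)}$, and $b>p\delta-n$ guarantees $\int_{S^*}\rho^{b}$-type tail sums near $\partial\Omega$ converge.

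The main obstacle is this bookkeeping: organizing the double summation (over target cubes $Q$ and over chain cubes $S$) so that the interchange of sums and the two H\"older inequalities (one in the fractional-oscillation estimate with exponent $p$, one upgrading $\ell^p$ to $\ell^q$ on the John side) leave exactly the weight $\rho^b$ against $g_u^\tau$ and a geometric series in the remaining scale parameter; getting the exponent to match the stated bound sharply — and in particular handling the extra $(s-1)(p-1)$ term that arises from summing $\ell(S)^{\text{neg}}$ along the curve using $\ell(S)\gtrsim t^s$ rather than $\ell(S)\gtrsim t$ — is the delicate point. I expect the cases $p=q$ and $p<q$ to be handled uniformly by treating $\ell^p\hookrightarrow\ell^q$ as a trivial embedding when $p=q$. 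Once \eqref{eq:goal cap} is established, Theorem~\ref{thm:capactity characterization} immediately upgrades it to the full weighted fractional Sobolev–Poincar\'e inequality \eqref{eq:weighted fractional Sob Poin}, completing the proof.
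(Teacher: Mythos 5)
You correctly reduce Theorem~\ref{4.4t} to the weighted capacity estimate for $s$-John domains (the paper's Theorem~\ref{1.1t}) via Theorem~\ref{thm:capactity characterization}, and your chaining/telescoping framework for that estimate is the right starting point. However, the part you label ``bookkeeping'' is precisely where the proof lives, and the mechanism you sketch for it --- a direct interchange of the double sum over target cubes and chain cubes controlled by a shadow-overlap estimate --- is problematic in the $s$-John case with $s>1$: by the $s$-John bound $d(x,B_i)\lesssim r_i^{1/s}$, the shadow of a chain ball of radius $r$ can have diameter as large as $r^{1/s}\gg r$, so the interchange does not organize itself into the convergent series you anticipate, and it is not clear how to recover the weight $\rho^b$ that way.

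The paper's proof of the capacity estimate avoids this. After telescoping, H\"older's inequality is applied along the chain with the specific exponent $\kappa=\frac{(s-1)(p-1)+\Delta}{sp}$, where the slack $\Delta>0$ is chosen so that $\frac{n+a}{s(n+b-\delta p)+(s-1)(p-1)+2\Delta}=\frac{q}{p}$; the conjugate sum $\sum_i r_i^{\kappa p/(p-1)}$ then converges by part (6) of the chain lemma (at most $Cr^{(1-s)/s}$ balls of radius $>r$), and this choice is exactly what injects the $(s-1)(p-1)$ term you identified as the ``delicate point.'' Then, instead of a global sum interchange, the $s$-John relation $r_i\gtrsim|x-y|^s$ for $y\in B_i$ converts $r_i$-weights to $|x-y|$-weights, the chain sum is regrouped into dyadic annuli around $x$, and a pigeonhole argument extracts for each $x$ in the ``good'' set $\mathscr C=\{x:u_{B_x}\geq 1/2\}$ a single radius $R_x\geq C\rho(x)$ with $CR_x^{s(n+b+\kappa p-\delta p)+\Delta}\leq\int_{B(x,R_x)\cap\Omega}g_u\rho^b\,dy$; Vitali's covering lemma applied to $\{B(x,R_x)\}_{x\in\mathscr C}$ then gives $\int_{\mathscr C}\rho^a\lesssim\big(\int_\Omega g_u\rho^b\big)^{q/p}$. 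The complementary set $\mathscr D$ is handled by the local fractional Poincar\'e on balls together with Besicovitch. Your pointwise telescope $1\leq|u(x)-u_Q|+\sum_i|u_{S_i}-u_{S_{i+1}}|$ implicitly merges these two cases, which is morally fine, but the $\kappa$-H\"older step and the pigeonhole/Vitali mechanism are what actually close the argument; without exhibiting them your sketch does not compile into a proof.
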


Note that the ranges for $b$ and $q$ in Theorem \ref{4.4t} are essentially sharp as Example 1.1 of \cite{G2017} indicates. The critial case $q=\frac{(n+a)p}{s(n+b-\delta p)+(s-1)(p-1)}$ can be achieved if $s=1$ or $p=1$ as in \cite[Remark 4.2]{G2017}, but it is technically very involved and we omit the details here. The case for other ranges of $s$ and $p$ remains open. We would like to comment that the validity of the fractional Sobolev-Poincar\'e inequality indeed gives control on the geometry of the domain $\Omega$; see \cite{BK1995} and \cite[Theorem 1.4]{G2017}.

Another class of irregular domains, which are largely considered in the literature (see for instance \cite{SS1990,G2017,JK2013,KOT2002}), are the so-called $\beta$-H\"older domains ($0<\beta\leq 1$), that is, a domain with a distinguished point $x_0\in\Omega$ such that for all $x\in \Omega$
\[k_{\Omega}(x,x_0)\leq\frac{1}{\beta}\log\frac{d(x_0,\partial\Omega)}{d(x,\partial\Omega)}+C,\]
where $k_\Omega$ is the quasihyperbolic distance in $\Omega$ (See Section 2 below for precise definition). The concept of H\"older domain was introduced by Smith and Stegenga \cite{SS1990} based on the earlier work of Becker and Pommerenke. Moreover, it is well-known that $\beta$-H\"older domains are $\frac{1}{\beta}$-John domains. The techniques for doing capacity estimates in H\"older domians are also well developed; see \cite{KOT2002,G2017}.

Our third main result can be regarded as a weighted fractional version of \cite[Theorem 1.4-1.5]{KOT2002} and \cite[Theorem 1.3]{G2017}.
\begin{thm}\label{t4.4}
Let $\Omega$ be a $\beta$-H\"older domain. Assume that $0<\delta<1$, $1\leq p\leq q$, $a\geq 0$ and $p\delta-n<b<(a+n)\beta\frac{p}{q}+p\delta-n$. Then
there exists a constants $\tau_0=\tau(\Omega)$ such that for each $\tau\in (0,\tau_0)$ and each $u\in C(\Omega)$, the weighted fractional Sobolev-Poincar\'e inequality \eqref{eq:weighted fractional Sob Poin} holds with $f=\rho^a$ and $g=\rho^b$.
\end{thm}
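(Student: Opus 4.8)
The proof rests on the capacity characterization of Theorem \ref{thm:capactity characterization}: it suffices to show that for every ball $B\subset\subset\Omega$ there is a constant $C$ with
\[
\Big(\int_A\rho(x)^a\,dx\Big)^{p/q}\le C\int_\Omega g_u^\tau(x)\,\rho(x)^b\,dx
\]
for all admissible $A\subset\Omega$ with $A\cap B=\emptyset$ and all $u\in C(\Omega)$ satisfying $u|_A\ge1$, $u|_B=0$. Fix a Whitney decomposition $\mathcal W$ of $\Omega$ and a cube $Q_0\in\mathcal W$ near the distinguished point $x_0$ of the $\beta$-H\"older condition, and write $\rho_0=d(x_0,\partial\Omega)$. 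Joining $B$ to $Q_0$ by a chain whose length depends only on $B$ reduces matters to the case $u_{Q_0}=0$; the complementary case is elementary, since then the right-hand side is already bounded below by a constant depending on $B$.

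A $\beta$-H\"older domain is $\tfrac1\beta$-John, so for $p=1$ the assertion already follows from Theorem \ref{4.4t} with $s=\tfrac1\beta$; for $p>1$ that route gives a strictly smaller range of $(b,q)$, owing to the term $(s-1)(p-1)$, and one must use the H\"older geometry directly, following the scheme of \cite{HK1998} in its fractional form \cite{G2017} and with the H\"older-domain chain geometry of \cite{KOT2002}. For each $Q\in\mathcal W$ fix a chain $\mathcal C(Q)=(Q=R_0,R_1,\dots,R_{m(Q)}=Q_0)$ of Whitney cubes, consecutive ones touching and of comparable size, extracted from a quasihyperbolic geodesic joining $Q$ to $Q_0$. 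The H\"older inequality $k_\Omega(x,x_0)\le\tfrac1\beta\log\tfrac{\rho_0}{\rho(x)}+C$ yields the \emph{logarithmic} length bound $m(Q)\lesssim\tfrac1\beta\log\tfrac{\rho_0}{\ell(Q)}+C$, while the quasihyperbolic boundary condition controls, for each fixed $R\in\mathcal W$, the ``shadow'' $\{Q\in\mathcal W:R\in\mathcal C(Q)\}$ and its $\rho^a$-mass by an appropriate power of $\ell(R)$; these are the quantitative geometric inputs (see \cite{KOT2002,G2017}). On the analytic side, a local fractional Poincar\'e inequality gives, for adjacent Whitney cubes $R,R'$ and a fixed dilate $cR$,
\[
|u_R-u_{R'}|\le C\,\ell(R)^{\delta-\frac np}\Big(\int_{cR}g_u^\tau(x)\,dx\Big)^{1/p},
\]
valid once $\tau<\tau_0(\Omega)$ — for small $\tau$ one first chains through balls of radius $\approx\tau\ell(R)$ in a fixed dilate of $R\cup R'$, as in \cite{G2017}, which is what fixes $\tau_0$. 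Telescoping along $\mathcal C(Q)$, using $u_{Q_0}=0$ and $u|_A\ge1$, one obtains, for every $Q\in\mathcal W$ meeting $A$ and a.e.\ $x\in A\cap Q$,
\[
1\le C\Big(|u(x)-u_Q|+\sum_{R\in\mathcal C(Q)}\ell(R)^{\delta-\frac np}\Big(\int_{cR}g_u^\tau\Big)^{1/p}\Big).
\]

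The core is the ensuing discrete weighted estimate. Raising this inequality to the power $q$, integrating it against $\rho^a$ over $A\cap Q$, invoking the local fractional $(p,q)$-Poincar\'e inequality on $Q$ for the term $|u(x)-u_Q|$, summing over all $Q\in\mathcal W$ meeting $A$, and using the elementary inequality $(\sum_Q c_Q)^{p/q}\le\sum_Q c_Q^{p/q}$ (valid since $q\ge p$), one reduces the problem to two sums. The first is the ``local'' sum $\sum_Q\ell(Q)^{(a+n)\frac pq+p\delta-n}\int_Qg_u^\tau$, which is dominated by $\int_\Omega g_u^\tau\rho^b$ as soon as $b\le(a+n)\tfrac pq+p\delta-n$ — a condition weaker than the hypothesis. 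The second is the ``chain'' sum
\[
\sum_{Q}\ell(Q)^{(a+n)\frac pq}\Big(\sum_{R\in\mathcal C(Q)}\ell(R)^{\delta-\frac np}\big(\int_{cR}g_u^\tau\big)^{1/p}\Big)^{p};
\]
here one applies H\"older's inequality inside the chain with a power of $\ell(R)$ as weight (the dual sum being finite by the geodesic structure in H\"older domains — the logarithmic length bound together with the near-monotone variation of the cube sizes along the chain), interchanges the order of summation, bounds for each fixed $R$ the resulting shadow sum by the geometric estimate above, and uses the bounded overlap of $\{cR\}_{R\in\mathcal W}$ (which again needs $\tau<\tau_0$, so that $B(x,\tau\rho(x))$ stays in a fixed dilate of the Whitney cube of $x$), so that the chain sum is $\lesssim\sum_{R\in\mathcal W}\ell(R)^{b}\int_{cR}g_u^\tau\lesssim\int_\Omega g_u^\tau(x)\,\rho(x)^b\,dx$; the exponents close precisely under $a\ge0$, $p\delta-n<b$ and $b<(a+n)\beta\tfrac pq+p\delta-n$.

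I expect this last estimate to be the main obstacle: the weight used in the H\"older step and the shadow bound must be tuned so that the two powers of $\ell(R)$ they produce add up to exactly $b$ and no worse, uniformly in the arbitrary admissible set $A$. Making the exponents meet at the sharp H\"older threshold — rather than at the cruder $\tfrac1\beta$-John threshold implied by Theorem \ref{4.4t} — relies precisely on having the logarithmic chain length in place of the polynomial one available in a general John domain, together with the correspondingly sharp control of the shadows. A secondary, essentially routine point is the small-$\tau$ version of the local fractional Poincar\'e inequality, handled by the ball-chaining of \cite{G2017}, which determines $\tau_0$.
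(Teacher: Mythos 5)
Your reduction via Theorem \ref{thm:capactity characterization} to a weighted capacity estimate, the Whitney--chain setup along quasihyperbolic geodesics, and the local term (your ``local sum'') all match the paper's proof of the key capacity estimate Theorem \ref{1.2t}: the local sum is precisely the estimate the paper runs on the set $\mathcal D=\{x\in A:u_{Q(x)}<\tfrac12\}$ using Lemma \ref{ddd} and the Besicovitch covering Lemma \ref{sss}. Where you genuinely diverge is in the chain term. The paper does \emph{not} interchange the order of summation over cubes and chains. Exactly as in the $s$-John case (Theorem \ref{1.1t}), after applying H\"older along each chain with weights $r_i^\varepsilon$ (finiteness from Lemma \ref{2.7l}) and the $\tfrac1\beta$-John bound $|x-y|\lesssim r_i^\beta$, it groups the chain into dyadic annuli around the terminal cube $Q_k$, concludes that for each $x\in\mathcal C$ there is a single good scale $R_x\gtrsim\rho(x)$ with $R_x^{(b-\Delta)/\beta+t}\lesssim\int_{B(x,R_x)\cap\Omega}g_u^\tau\rho^b$, and then uses the Vitali $5r$-covering Lemma \ref{ppp} to sum over a \emph{disjoint} subfamily; the exponent there is $q/p\ge1$, so disjointness closes the argument immediately with no further geometric input.

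Your proposed route instead sums over all cubes, interchanges, and calls on a shadow estimate $\sum_{\{Q:R\in\mathcal C(Q)\}}\ell(Q)^{(a+n)p/q}\lesssim\ell(R)^{(a+n)\beta p/q}$. That is the actual gap: nothing of this form is stated in the paper (the shadows $S(Q)$ are introduced in Section 2 but never used in the proof of Theorem \ref{1.2t}; only the chain-sum bound of Lemma \ref{2.7l} is), and the exponent $(a+n)p/q$ can be strictly less than $n$ (e.g.\ $a=0$, $p<q$), so the bound cannot be read off from $\diam S(R)\lesssim\ell(R)^\beta$ and $|S(R)|\lesssim\ell(R)^{n\beta}$ alone. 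Making it hold at exactly the sharp threshold $b<(a+n)\beta\tfrac pq+p\delta-n$ would require a separate quantitative lemma about $\beta$-H\"older domains that you neither state nor prove; you also never locate where the lower bound $b>p\delta-n$ enters, whereas in the paper it is what makes $\Delta<b$, i.e.\ makes the exponent $\tfrac{\Delta-b}{\beta}$ negative so that the dyadic-annulus selection of the good radius $R_x$ works. Either prove the shadow estimate you need, or switch to the maximal-function/Vitali mechanism, which is what both the paper and \cite{G2017} do uniformly for the $s$-John and H\"older cases and which sidesteps the shadow sum entirely.
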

The ranges for $b$ and $q$ are essentially sharp as Example 1.2 of \cite{G2017} indicates.

Since our paper generalizes the corresponding results of \cite{HK1998} and \cite{G2017} in a nature way to the weighted fractional setting, many of the arguments used in this paper are similar to the ones in those papers. For the convenience of readers, we have included as many details as possible. The structure of this paper is as follows. In section 2, some basic lemmas and konwlegdes are introduced. When it comes to section 3, we prove the theorems connecting the weighted capacity-type inequalities and the weighted fractional Sobolev-Poincare inequalities. Then, the desired capacity-type inequalities for $\alpha$-John domains are proved in section 4. Moreover, we deduce the similar capacity-type inequalities as to $\beta$-Holder domains in section 5.
\bigskip
\section{Preliminary}

First of all, we fix some notation. Throughout this paper, $C(\cdot)$ denotes a constant, where ``$\cdot$" contains all parameters on which the constant depends. If there exists a constant $C\geq 1$ such that $A/C \le B \le CA$, then we write $A\asymp B$.

We shall always assume the dimension $n\geq 2$. The Euclidean distance between two points $x,y\in \rr^n$ is denoted by $|x-y|$, while $d(A,B)$ represents the Euclidean distance between two sets $A,B\subset\rr^n$. We also write $d(x,A)$ for the Euclidean distance between the point $x\in\rr^n$ and the set $A\subset\rr^n$. The notation $B\subset\subset\Omega$ simply means that $B$ is a subset of $\Omega$ with $d(B,\partial\Omega)>0$. The Euclidean diameter of a set $E\subset\rr^n$ is denoted by $d(E)$. For a measurable set $E\subset \rr^n$, $|E|$ represents the $n$-dimensional Lebesgue measure of $E$.

In $\rr^n$, the ball with the center $x$ and the radius $r$ is denoted by $B(x,r)$, while a cube $Q$ in $\rr^n$ is denoted as \[Q=\{x\in\rr^n:a_i<x_i<a_i+a,1\leq i\leq n,a>0\}.\]
The side-length of a cube $Q$ is denoted by $\ell(Q)$, that is, $\ell(Q)=a$ in the above definition. If $B=B(x,r)$ is a ball in $\rr^n$ and $t>0$ is a positive constant, then $tB$ denotes the ball $B(x,tr)$. For a cube $Q\in\rr^n$, $tQ$ is the cube with the same center, such that $\ell(tQ)=t\ell(Q)$.

The following fractional Sobolev-Poincar\'e inequality on balls or cubes in $\rr^n$ is well-known.

\begin{lem}[\cite{HSV2013}]\label{ddd}
Let $\Omega$ be a ball with radius $r$ or a cube with side-length $r$ in $\rr^n$. Assume that $1\leq p\leq q< \infty$, $\delta\in (0,1)$, $\tau\in(0,\infty)$. Then for all $u \in C(\Omega)$,
\begin{equation}
\int_{\Omega}{|u(x)-u_{\Omega}|}^qdx\leq C(n) r^{n+\frac{q}{p}(p\delta-n)}{\Big(\int_{\Omega}\int_{\Omega\cap B(x,\tau d(x,\partial \Omega))} \frac{{|u(x)-u(y)|}^p}{{|x-y|}^{n+p\delta}}dydx\Big)}^{\frac{q}{p}}.
\end{equation}
\end{lem}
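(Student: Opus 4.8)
\noindent\emph{Proof strategy.}
The plan is to first remove the scale by an affine change of variables, reducing to the unit ball or cube, and then to handle the unit domain by a Whitney decomposition, so that on each Whitney cube the truncation radius is comparable to the side of the cube and the truncated seminorm may be replaced by its non-truncated (classical) counterpart.

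\emph{Step 1: reduction to $r=1$.} Let $\Omega_1$ be the unit ball (resp.\ unit cube) and let $\Phi(x)=x_0+rx$ be the similarity mapping $\Omega_1$ onto $\Omega$, where $x_0$ is the center (resp.\ a corner). Put $v:=u\circ\Phi$, so $u_\Omega=v_{\Omega_1}$. Since $\Phi$ scales distances by $r$, it maps $\Omega_1\cap B(x,\tau d(x,\partial\Omega_1))$ onto $\Omega\cap B(\Phi(x),\tau d(\Phi(x),\partial\Omega))$, and a direct change of variables gives
\[
g_u^\tau(\Phi(x))=r^{-p\delta}\,g_v^\tau(x),\qquad \int_\Omega g_u^\tau(y)\,dy=r^{\,n-p\delta}\int_{\Omega_1}g_v^\tau(x)\,dx,
\]
while $\int_\Omega|u-u_\Omega|^q\,dx=r^{\,n}\int_{\Omega_1}|v-v_{\Omega_1}|^q\,dx$. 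Substituting into the claimed inequality, every power of $r$ cancels: the right-hand side becomes $C(n)\,r^{\,n}\big(\int_{\Omega_1}g_v^\tau\big)^{q/p}$ and the left-hand side $r^{\,n}\int_{\Omega_1}|v-v_{\Omega_1}|^q$ (this is precisely why the exponent in the statement has the form $n+\frac qp(p\delta-n)$). Hence it suffices to treat $\Omega=\Omega_1$, i.e.\ $r=1$.

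\emph{Step 2: a same-scale local estimate.} For a cube $Q$ of side $\ell$ and any $\sigma>0$ there is $C(n,p,\sigma)$ with
\[
\int_Q|w-w_Q|^p\,dx\le C(n,p,\sigma)\,\ell^{\,p\delta}\int_Q\int_{Q\cap B(x,\sigma\ell)}\frac{|w(x)-w(y)|^p}{|x-y|^{n+p\delta}}\,dy\,dx
\]
for all $w\in C(Q)$. When $\sigma\ge\sqrt n$ the ball covers $Q$, and the estimate follows from $|w(x)-w_Q|\le\frac1{|Q|}\int_Q|w(x)-w(y)|\,dy$, Jensen's inequality, and the crude bound $1\le(\sqrt n\,\ell/|x-y|)^{n+p\delta}$. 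For small $\sigma$ one partitions $Q$ into $\asymp\sigma^{-n}$ congruent subcubes of side $\asymp\sigma\ell$, applies the previous case on each (slightly dilated) subcube — the dilates still sit inside the relevant balls $B(x,\sigma\ell)$ because $\sigma\ell$ is comparable to their side — and telescopes along a finite chain of neighbouring subcubes, the loss being absorbed into $C(n,p,\sigma)$.

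\emph{Step 3: Whitney chaining on $\Omega_1$, and the main obstacle.} Take a Whitney decomposition $\Omega_1=\bigcup_j Q_j$ with $\ell(Q_j)\asymp d(Q_j,\partial\Omega_1)$, a fixed small dilate $\lambda Q_j\subset\subset\Omega_1$ of bounded overlap, and the standard chain structure joining each $Q_j$ to a fixed central cube $Q_0$ by a finite string $\gamma(x)\ni Q_j$ of neighbouring Whitney cubes of geometrically varying size. For $x\in Q_j$ one has $d(x,\partial\Omega_1)\asymp\ell(Q_j)$, so $B(x,\tau d(x,\partial\Omega_1))\supset B(x,c(n)\tau\,\ell(Q_j))$; applying Step 2 on $\lambda Q_j$ with $\sigma\asymp\tau$ gives, with $a_j:=\int_{\lambda Q_j}\int_{\Omega_1\cap B(x,\tau d(x,\partial\Omega_1))}\frac{|v(x)-v(y)|^p}{|x-y|^{n+p\delta}}\,dy\,dx$,
\[
\int_{Q_j}|v-v_{Q_j}|^p\,dx\le C(n,p,\tau)\,\ell(Q_j)^{\,p\delta}\,a_j,\qquad \sum_j a_j\le C(n)\int_{\Omega_1}g_v^\tau(x)\,dx,
\]
the last bound by bounded overlap. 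For a Whitney cube $Q$ and its parent $Q^+$ this yields $|v_Q-v_{Q^+}|\lesssim\ell(Q)^{-n/p}\big(\int_{\lambda Q}|v-v_{\lambda Q}|^p\big)^{1/p}\lesssim\ell(Q)^{\,\delta-n/p}a_Q^{1/p}$, hence telescoping along $\gamma(x)$,
\[
|v(x)-v_{Q_0}|\lesssim\sum_{Q\in\gamma(x)}\ell(Q)^{\,\delta-n/p}\,a_Q^{1/p}\qquad\text{for a.e. }x\in\Omega_1 .
\]
One then bounds the $L^q(\Omega_1)$-norm of the right-hand side, which I expect to be the technical heart: grouping the sum by generations and using that for a ball/cube the number of generation-$k$ Whitney cubes grows only like $2^{k(n-1)}$, a standard discrete fractional maximal-function (or Boman-chain) argument dominates it by $C\big(\sum_j a_j\big)^{1/p}$ for $1\le p\le q$ in the Sobolev range $q\le np/(n-p\delta)$ (and all $q<\infty$ when $p\delta\ge n$, with $C$ then also depending on $q$). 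Combining this with $\int_{\Omega_1}|v-v_{\Omega_1}|^q\le C(q)\inf_c\int_{\Omega_1}|v-c|^q\le C(q)\int_{\Omega_1}|v-v_{Q_0}|^q$ and Step 1 finishes the proof. The point to stress is that the truncation $\tau d(\cdot,\partial\Omega)$ is the whole difficulty: without it, on a ball/cube the inequality is just the classical embedding $W^{\delta,p}\hookrightarrow L^q$, whereas here one must show the far smaller truncated seminorm still controls the oscillation — which is exactly what the Whitney/chaining mechanism delivers, with the sharp range emerging from the polynomial growth of the Whitney family. Steps 1 and 2 are routine.
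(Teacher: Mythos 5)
You should first note that the paper itself contains no proof of this lemma: it is imported verbatim (and somewhat loosely) from \cite{HSV2013}, so there is no internal argument to compare yours against; what you have written is a self-contained proof sketch along the standard chaining route, which is indeed how results of this type are established. Your Step 1 (scaling reduction, with the exponent $n+\frac{q}{p}(p\delta-n)$ exactly compensating the scaling of the truncated seminorm) and Step 2 (the $(p,p)$-Poincar\'e inequality on a cube with truncation radius $\sigma\ell$, via Jensen for $\sigma\ge\sqrt n$ and subdivision plus finite chaining for small $\sigma$) are correct and complete. Your observation that the admissible range is $q\le np/(n-p\delta)$ when $p\delta<n$ is also correct and in fact necessary: concentrating a bump at the center of the unit ball shows the inequality fails for supercritical $q$, so the printed hypothesis ``$1\le p\le q<\infty$'' is over-generous and your proof targets the actual theorem of \cite{HSV2013}; likewise the constant must depend on $p,q,\delta,\tau$ and not only on $n$.

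The genuine gap is in Step 3, and it sits exactly at the decisive point. First, the pointwise inequality $|v(x)-v_{Q_0}|\lesssim\sum_{Q\in\gamma(x)}\ell(Q)^{\delta-n/p}a_Q^{1/p}$ does not follow from the finite Whitney chain as written: the chain terminates at the cube $Q_j\ni x$, and the local term $|v(x)-v_{Q_j}|$ is not controlled by the displayed sum. One must either prolong the chain by dyadic subcubes of $Q_j$ shrinking to a Lebesgue point of $v$ (which is legitimate here because the truncation radius on $Q_j$ is bounded below by $c\tau\ell(Q_j)$, so Step 2 applies at every smaller scale), or avoid pointwise bounds altogether and estimate $\|v-v_{Q_0}\|_{L^q(Q_j)}$ cube by cube. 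Second, and more importantly, Step 2 only yields a $(p,p)$ estimate at each scale, so the entire gain from exponent $p$ to exponent $q$ must come from the $L^q$ bound of the chain sum, which you acknowledge is ``the technical heart'' but only assert via a ``standard discrete fractional maximal-function (or Boman-chain) argument.'' Those tools do exist and do close the argument for a ball or cube (a John/Boman domain) --- this is precisely the self-improvement machinery of Haj\l asz--Koskela and the Riesz-potential chaining used in \cite{HSV2013} --- but as written the key estimate is invoked rather than proved, so the proposal is a correct and viable outline rather than a complete proof.
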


The following chain lemma comes from \cite[the proof of Theorem 9]{HK1998}.

\begin{lem}\label{wwe}
Suppose that $\Omega\subset\rr^n$ is a $s$-John domain and $M>1$ is a fixed constant. Let $B_0=(x_0,\frac{\rho(x_0)}{4M})$, where $x_0$ is a point of $\Omega$. Then, there exists a constant $C>0$, depending only on $\Omega$, $M$ and $n$ such that, for any $x\in \Omega$, there exist finite balls $B_i=B(x_i,r_i)$($i=1,2,...,k$) with the following properties:
\begin{enumerate}
	\item $|B_i\cap B_{i-1}|\geq 1/C|B_i\cup B_{i-1}|$ for any $1\leq i\leq k$.
	
	\item $d(x,B_i)\leq C {r_i}^{\frac{1}{s}}$ as to any $0\leq i\leq k$.
	
	\item $d(B_i,\partial\Omega)\geq M r_i$ with respect to any $0\leq i\leq k$.
	
	\item $\sum_{i=0}^{k} \chi_{B_{i}} \leq C \chi_{\Omega}$.
	
	\item $|x-x_i|\leq C {r_i}^{\frac{1}{s}}$ for any $0\leq i\leq k-1$ and $B_k=B(x,\frac{\rho(x)}{4M})$.
	
	\item For any $r>0$, the number of balls $B_i$ with radius $r_i>r$ is less than $C r^{\frac{1-s}{s}}$ when $s>1$ and is less than $\log_2(Cr^{-1})$ when $s=1$.
	
	\item As to each $y\in B_i$, $r_i\asymp\rho(y)$, with respect to any $0\leq i \leq k$, where the constants only depend on $\Omega$, $M$ and $n$.
\end{enumerate}

\end{lem}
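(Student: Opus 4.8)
The plan is to reconstruct, following Hajlasz--Koskela \cite{HK1998}, a Whitney-type chain of balls along an $s$-John curve joining $x$ to $x_0$. Fix $x\in\Omega$ and let $\gamma\colon[0,L]\to\Omega$ be an arc-length parametrized $s$-John curve with $\gamma(0)=x$, $\gamma(L)=x_0$, so that $C_J\,\rho(\gamma(t))\ge t^{s}$ for all $t\in[0,L]$, with $C_J$ depending only on $\Omega$. By a standard reduction (cf. \cite{HK1998}) we may assume that $\gamma$ meets each Whitney ball of $\Omega$ at most a bounded, dimension-dependent number of times; this is what will yield the bounded-overlap property (4). Now pick points greedily: starting from $\gamma(0)=x$, given a point $\gamma(\tau)$ already chosen, take the next point to be $\gamma(\tau')$ with $\tau'$ the largest parameter $>\tau$ for which $|\gamma(\tau')-\gamma(\tau)|\le \rho(\gamma(\tau))/(100M)$, continuing until $x_0$ is reached (appending $x_0$ as the final centre if necessary). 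Relabel the chosen points so that the associated parameters satisfy $0=\tau_k<\tau_{k-1}<\dots<\tau_0=L$, and set $x_i=\gamma(\tau_i)$, $r_i=\rho(x_i)/(4M)$, $B_i=B(x_i,r_i)$; then $B_0=B(x_0,\rho(x_0)/(4M))$ and $B_k=B(x,\rho(x)/(4M))$, as required.

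Properties (1), (2), (3), (5), (7) are then routine. Since the centres of consecutive balls lie within $\rho(x_i)/(100M)$ of one another and $\rho$ is $1$-Lipschitz, consecutive radii are comparable, $B_i\cap B_{i-1}$ contains a ball of radius $\asymp r_i$, and $B_i\cup B_{i-1}$ is contained in one, giving (1); (3) is immediate from $d(B_i,\partial\Omega)=\rho(x_i)-r_i=(1-\tfrac1{4M})\rho(x_i)\ge \tfrac14\rho(x_i)=Mr_i$; (7) follows from $|\rho(y)-\rho(x_i)|\le r_i$ for $y\in B_i$. For (2) and (5), arc-length parametrization gives $|x-x_i|\le\tau_i$, while the $s$-John inequality gives $\tau_i\le(C_J\rho(x_i))^{1/s}=(4MC_J)^{1/s}r_i^{1/s}$, whence $d(x,B_i)\le|x-x_i|\le Cr_i^{1/s}$.

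The heart of the matter is (6), and this is where the $s$-John carrot condition is essential. The inequality $C_J\rho(\gamma(t))\ge t^{s}$ gives, for every $\lambda>0$, the inclusion $\{t\in[0,L]:\rho(\gamma(t))\le\lambda\}\subseteq[0,(C_J\lambda)^{1/s}]$. Hence, for fixed $j\ge0$, every ball with $r_i\in(2^{j}r,2^{j+1}r]$ has $\rho(x_i)=4Mr_i\le 8M2^{j}r$, so its parameter $\tau_i$ lies in an interval of length $\lesssim (2^{j}r)^{1/s}$; on the other hand two such balls consecutive in $\tau$-order are separated in parameter by at least one construction step $\ge\rho(x_i)/(100M)\gtrsim 2^{j}r$. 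Therefore there are $\lesssim (2^{j}r)^{1/s}/(2^{j}r)=(2^{j}r)^{(1-s)/s}$ such balls, and summing over $j\ge0$,
\[
\#\{i:r_i>r\}\ \lesssim\ \sum_{j\ge0}(2^{j}r)^{\frac{1-s}{s}}.
\]
For $s>1$ this is a convergent geometric series equal to $Cr^{(1-s)/s}$; for $s=1$ each level contributes $O(1)$ balls while, since $r_i\le\diam(\Omega)/(4M)$ for every $i$, only $\lesssim\log_2(\diam(\Omega)/r)$ levels are nonempty, giving the bound $\log_2(Cr^{-1})$.

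Finally (4): by (7) every ball containing a given point $y$ has radius $\asymp\rho(y)$ and centre within $\asymp\rho(y)$ of $y$, hence is comparable to the Whitney ball of $y$; since $\gamma$ was arranged to pass through each Whitney region only boundedly often, at most $C(n,M)$ of the $B_i$ overlap, so $\sum_i\chi_{B_i}\le C\chi_\Omega$. I expect the only real obstacles to be the counting step (6) --- getting the exponent $(1-s)/s$, respectively the logarithm, exactly right --- and the preliminary reduction to a curve that does not wind back through Whitney regions, on which (4) rests; everything else is bookkeeping once the chain has been constructed.
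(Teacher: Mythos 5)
Your construction is exactly the chain-of-balls argument from the proof of Theorem 9 in Hajlasz--Koskela, which is all the paper itself offers for this lemma (it is stated with a citation, not a proof), and properties (1), (2), (3), (5), (6), (7) are handled correctly: the maximality of $\tau'$ forces $|\gamma(\tau')-\gamma(\tau)|=\rho(\gamma(\tau))/(100M)$ unless $\tau'=L$, so the arc-length parameter advances by at least $\rho(\gamma(\tau))/(100M)$ per step, which both makes the chain finite (since $\rho(\gamma(t))\geq\max\{\rho(x)-t,\,t^{s}/C_J\}$ is bounded below along $\gamma$ by a positive constant depending on $x$) and drives your dyadic count in (6). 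Two cosmetic points: for $s=1$ your count comes out as $C\log_2(C/r)$ rather than literally $\log_2(Cr^{-1})$, which is harmless for every use of the lemma in this paper; and finiteness of $k$ deserves the one line just indicated.

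The genuine gap is in (4). You base it on the claim that ``by a standard reduction we may assume that $\gamma$ meets each Whitney ball of $\Omega$ at most a bounded number of times.'' This is not available for free: for $s>1$ an $s$-John curve may spend parameter length of order $\lambda^{1/s}$ in a region where $\rho\approx\lambda$, and hence may re-enter a single Whitney cube of side $\approx\lambda$ on the order of $\lambda^{(1-s)/s}$ times, which blows up as $\lambda\to 0$; turning $\gamma$ into a curve with the stated property requires a loop-erasure argument (one would have to note that deleting loops only decreases arc-length parameters, so $C\rho(\gamma(t))\geq t^{s}$ survives) which you neither state nor prove. Fortunately your own selection rule already repairs (4) without touching the curve: since $\tau'$ is the \emph{largest} parameter at distance at most $\rho(\gamma(\tau))/(100M)$ from $\gamma(\tau)$, every centre chosen after $y_m$ lies at distance at least $\rho(y_m)/(100M)$ from $y_m$. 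Hence if $z\in B_i\cap B_j$ for several indices, all the corresponding centres have $\rho(x_i)\asymp\rho(z)$, lie in $B\bigl(z,C\rho(z)/M\bigr)$, and are pairwise separated by at least $c\,\rho(z)/M$; a volume-packing count then gives $\sum_i\chi_{B_i}\leq C(n)\chi_\Omega$. Replace the Whitney-ball reduction by this packing argument and the proof is complete.
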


For the convenience of readers, we recall two covering lemmas that are needed in our later proofs. The first one is Vatali's covering lemma.
\begin{lem}\label{ppp}
Suppose $E$ is a bounded set in $\rr^n$. Let $E\subset\cup_{j\in J} B_j$, where $\{B_j\}_{j\in J}$ is a family of balls. Then there exists a subfamily of countable pairwise disjoint balls (possibly finite) $B_1$,$B_2$,... such that $E\subset \cup^{\infty}_{i=1} 5B_i$.
\end{lem}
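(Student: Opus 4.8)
The plan is to run the classical greedy ``generations'' argument for the $5r$-covering lemma, using the boundedness of $E$ to circumvent the usual hypothesis that the radii be uniformly bounded. First I would discard every ball $B_j$ with $B_j\cap E=\emptyset$; this leaves the coverage of $E$ unchanged, so we may assume $B_j\cap E\neq\emptyset$ for all $j\in J$. Write $B_j=B(c_j,r_j)$, fix $x_*\in E$ (if $E=\emptyset$ the empty subfamily works), and put $R=\sup_{j\in J}r_j$.

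Before the main construction I would peel off the case of a very large ball. If some $r_{j_0}>d(E)$, pick $z\in B_{j_0}\cap E$; then for every $x\in E$ we have $|x-c_{j_0}|\leq|x-z|+|z-c_{j_0}|<d(E)+r_{j_0}<2r_{j_0}$, so $E\subset 2B_{j_0}\subset 5B_{j_0}$ and the single (trivially pairwise disjoint) family $\{B_{j_0}\}$ already does the job. In particular this covers the degenerate case $d(E)=0$. Hence from now on we may assume $r_j\leq d(E)<\infty$ for all $j$, i.e.\ $0<R<\infty$.

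Now the generations argument. For $n\geq1$ set $\mathcal F_n=\{B_j:\ R/2^{n}<r_j\leq R/2^{n-1}\}$, so every surviving ball lies in $\bigcup_{n\geq1}\mathcal F_n$. Since each surviving ball meets $E\subset B(x_*,d(E))$ and has radius at most $R$, all of them are contained in the fixed bounded ball $B(x_*,d(E)+2R)$; a volume comparison then shows that any pairwise disjoint subfamily of $\bigcup_{m\leq n}\mathcal F_m$ has at most $\bigl((d(E)+2R)2^{n}/R\bigr)^{n}$ elements. I would therefore choose, inductively on $n$, a subfamily $\mathcal G_n\subset\mathcal F_n$ of maximal cardinality among those whose members are pairwise disjoint and are also disjoint from every ball already selected in $\mathcal G_1,\dots,\mathcal G_{n-1}$; by maximality of the cardinality, $\mathcal G_n$ is maximal with respect to inclusion among such subfamilies. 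Put $\mathcal G=\bigcup_{n\geq1}\mathcal G_n$, a countable (finite within each generation) pairwise disjoint family of balls, and re-enumerate it as $B_1,B_2,\dots$.

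Finally I would check the covering property. Let $B=B(c,r)$ be any surviving ball, say $B\in\mathcal F_n$. Either $B\in\mathcal G_n$, in which case trivially $B\subset 5B'$ with $B'=B\in\mathcal G$; or, by maximality of $\mathcal G_n$, the ball $B$ meets some $B'=B(c',r')\in\mathcal G_m$ with $m\leq n$. In the latter case $r'>R/2^{m}\geq R/2^{n}\geq r/2$, so choosing $z\in B\cap B'$ we get, for every $x\in B$, $|x-c'|\leq|x-z|+|z-c'|<2r+r'<5r'$, i.e.\ $B\subset 5B'$. Therefore $E\subset\bigcup_{j\in J}B_j\subset\bigcup_{B'\in\mathcal G}5B'$, which is the asserted conclusion. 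The only genuinely delicate point is the one disposed of at the outset — that ``$E$ bounded'' is strictly weaker than ``the radii $r_j$ are uniformly bounded'', which forces the large-ball case to be treated separately; after that reduction everything is the standard textbook argument.
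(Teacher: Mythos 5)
Your proof is correct. The paper does not prove this statement at all: Lemma \ref{ppp} is recalled as the classical Vitali $5r$-covering lemma (with a reference-free statement), so there is no argument in the paper to compare against; your generations/greedy construction is the standard proof, and your preliminary reduction handling a ball of radius larger than $d(E)$ is exactly the extra step needed because the hypothesis here is boundedness of $E$ rather than a uniform bound on the radii. The only blemish is cosmetic: in the volume-counting estimate you use the letter $n$ simultaneously for the dimension and for the generation index, which should be disentangled, but this does not affect the validity of the argument.
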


The second one is called the Besicovitch covering lemma.

\begin{lem}\label{sss}
Assume that $E\subset\rr^n$ is a bounded set. Let $B_x=B(x,r_x)$ for $x\in E$. Then we may find a sequence of points (possibly finite) $x_i\in E$ ($i=1,2,...$) such that $E\subset \bigcup^{\infty}_{i=1} B_{x_i}$ with the property that no point of $\rr^n$ belongs to more than $C(n)$ such balls, where $C(n)$ is a constant only depending on the dimension $n$.
\end{lem}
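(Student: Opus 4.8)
The statement is the classical Besicovitch covering lemma, and the plan is to run its standard proof: a greedy selection of balls in order of (essentially) decreasing radius, followed by a geometric packing estimate that controls the multiplicity of the selected family.

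First I would dispose of the case $\sup_{x\in E}r_x=\infty$: since $E$ is bounded, this produces some $x_0\in E$ with $r_{x_0}>d(E)$, so that $E\subset B_{x_0}$ and a single ball suffices. Hence I may assume $R:=\sup_{x\in E}r_x<\infty$. The selection then runs as follows: put $M_1=R$ and choose $x_1\in E$ with $r_{x_1}>\tfrac34 M_1$; inductively, given $x_1,\dots,x_{j-1}$, stop if $E\subset\bigcup_{i<j}B_{x_i}$, and otherwise set $M_j=\sup\{r_x:x\in E\setminus\bigcup_{i<j}B_{x_i}\}$ and choose $x_j\in E\setminus\bigcup_{i<j}B_{x_i}$ with $r_{x_j}>\tfrac34 M_j$. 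The sequence $(M_j)_j$ is non-increasing, so for every $i<j$ one has $r_{x_j}\le M_j\le M_i<\tfrac43 r_{x_i}$ (essentially decreasing radii) and, by construction, $x_j\notin B_{x_i}$, i.e. $|x_i-x_j|\ge r_{x_i}$. A short computation using these two facts shows that the shrunk balls $B(x_i,r_{x_i}/3)$ are pairwise disjoint; since they all lie in the bounded set $\{y:d(y,E)<R\}$, their Lebesgue measures form a convergent series, whence $r_{x_j}\to0$ if the process does not terminate. Consequently $E\subset\bigcup_iB_{x_i}$: an $x\in E$ lying in no $B_{x_i}$ would be uncovered at every stage $j$, forcing $r_x\le M_j<\tfrac43 r_{x_j}\to0$, contradicting $r_x>0$.

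The bounded-overlap property is the technical heart of the argument. Fix $a\in\mathbb{R}^n$ and set $I=\{i:a\in B_{x_i}\}$; the goal is $\#I\le C(n)$. I would organise $I$ by dyadic scale, $I_k=\{i\in I:2^k\le r_{x_i}<2^{k+1}\}$. On the one hand, within a single block $I_k$ each centre $x_i$ lies in $B(a,2^{k+1})$ while distinct centres are at mutual distance $\ge 2^k$ (since $|x_i-x_j|\ge r_{x_{\min(i,j)}}\ge2^k$); hence the balls $B(x_i,2^{k-1})$ are disjoint and contained in $B(a,3\cdot2^k)$, and a volume count gives $\#I_k\le6^n$. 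On the other hand, if $i<j$ with $r_{x_j}<\tfrac12 r_{x_i}$ and both in $I$, then $|x_i-a|\ge|x_i-x_j|-|x_j-a|>r_{x_i}-\tfrac12 r_{x_i}=\tfrac12 r_{x_i}>|x_j-a|$, and the law of cosines in the triangle $x_iax_j$ together with $|x_i-x_j|\ge r_{x_i}>|x_i-a|$ gives $\cos\angle x_iax_j<\tfrac{|x_j-a|}{2\,|x_i-a|}<\tfrac12$, so the rays from $a$ to $x_i$ and to $x_j$ make an angle exceeding $\pi/3$. Splitting $I$ into two subfamilies according to the parity of $k$, within each subfamily any two indices from different blocks have radii differing by a factor at least $2$, hence give $\pi/3$-separated directions at $a$; since $\mathbb{S}^{n-1}$ carries at most $C(n)$ pairwise $\pi/3$-separated unit vectors, each subfamily meets at most $C(n)$ blocks, each block contributing at most $6^n$ indices (the single index, if any, with $x_i=a$ handled trivially), and therefore $\#I\le C(n)$, which is exactly the asserted bound.

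The main obstacle is this last estimate. Neither tool works alone: a pure volume-packing count cannot control indices whose radii range over infinitely many scales, while the angular argument is effective only once the two radii differ by a fixed factor; one must genuinely combine them, partitioning $I$ into a bounded number of scale clusters, packing within each cluster and separating clusters by angle. Everything else — the selection, the covering property, and the finiteness of the selected family — is routine.
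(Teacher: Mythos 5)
The paper does not prove this lemma at all: it is quoted as the classical Besicovitch covering lemma and used as a black box, so there is no in-paper argument to compare against. Your proposal is a correct rendition of the standard proof (greedy selection with the $3/4$-maximality rule, disjointness of the one-third balls to force $r_{x_i}\to 0$ and hence the covering property, then the overlap bound by combining a volume-packing count within dyadic radius blocks with the law-of-cosines angular separation between blocks of well-separated scale). One small step you leave implicit: when two indices of the same parity class lie in different dyadic blocks, you need the \emph{earlier-selected} index to be the one with the larger radius before applying the angular estimate (which uses $|x_i-x_j|\ge r_{x_i}$ for $i<j$); this does follow from your observation that $r_{x_j}<\tfrac43 r_{x_i}$ for $i<j$, since a factor-$2$ discrepancy then forces $r_{x_j}<\tfrac12 r_{x_i}$, but it is worth stating explicitly. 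With that sentence added, the argument is complete and yields the stated dimensional constant.
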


Next, we introduce the quasihyperbolic distance. Let $\Omega\subset\rr^n$ be a proper domain, where a proper domain means a domain $\Omega\subsetneqq\rr^n$. The quasihyperbolic distance between two points $x,y\in\Omega$ is defined to be
\[k_\Omega(x,y)=\inf_\gamma\int_\gamma\frac{ds}{\rho(x)},\]
where the infimum is taken over all curves $\gamma$ in $\Omega$ connecting $x$ and $y$.

Recall that the Whitney decomposition $\mathcal{W}(\Omega)$ of a domain $\Omega$ is a collection of cubes in $\Omega$ such that they are pairwise disjoint, with the property that the union of the closure of these cubes are the whole $\Omega$, satisfying that \[d(Q)\leq d(Q,\partial\Omega)\leq 4d(Q),\]
for all $Q\in \mathcal{W}(\Omega)$.

In our article, we need the Whitney-type decomposition of a proper domain $\Omega\subset\rr^n$ given as follows.
\begin{lem}[\cite{S1970}]\label{lemma:Whitney}
Let $\Omega$ be a proper domain of $\rr^n$. Suppose $k\geq 2$ is an integer. Thus, we have a Whitney-type decomposition $\mathcal{V}_k(\Omega)=\{Q_i:i\geq 1\}$ of some cubes, satisfying the following three properties:
\begin{enumerate}
	\item $\bigcup_i \overline{Q_i}=\Omega$;
	
	\item $Q_i\cap Q_j=\emptyset$ for any $i\neq j$;
	
	\item $kd(Q_i)\leq d(Q_i,\partial\Omega)\leq 5kd(Q_i)$ for all $i\geq 1$.
\end{enumerate}
\end{lem}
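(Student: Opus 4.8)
This is precisely the classical Whitney decomposition of Stein \cite{S1970}, so the plan is to reproduce that construction while tracking the comparability constant $k$. Fix the parameter $A:=\tfrac32 k\sqrt n$, and for each $j\in\zz$ let $\mathcal D_j$ be the tiling of $\rr^n$ by half-open dyadic cubes of side length $2^{-j}$, so that every $Q\in\mathcal D_j$ has diameter $d(Q)=\sqrt n\,2^{-j}$, the cubes of a fixed generation are pairwise disjoint, and any two dyadic cubes (of arbitrary generations) are either disjoint or nested. Since $\Omega\subsetneqq\rr^n$ is open, $0<\rho(x)=d(x,\partial\Omega)<\infty$ for every $x\in\Omega$, so $\Omega$ is the disjoint union of the shells
\[
\Omega_j:=\{x\in\Omega:\ A\,2^{-j}<\rho(x)\le A\,2^{-j+1}\},\qquad j\in\zz .
\]
I would then introduce the preliminary family $\mathcal F:=\bigcup_{j\in\zz}\{Q\in\mathcal D_j:\ Q\cap\Omega_j\ne\emptyset\}$ and take $\mathcal V_k(\Omega)$ to be the set of maximal cubes of $\mathcal F$.

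The first step is to show that every $Q\in\mathcal F$, say $Q\in\mathcal D_j$, satisfies $\overline Q\subset\Omega$ together with $k\,d(Q)\le d(Q,\partial\Omega)\le 5k\,d(Q)$. This is a short computation: picking $x\in Q\cap\Omega_j$ and using $|x-y|\le d(Q)=\sqrt n\,2^{-j}$ for all $y\in\overline Q$ gives $(A-\sqrt n)2^{-j}\le\rho(y)\le(2A+\sqrt n)2^{-j}$ on $\overline Q$; in particular $\rho>0$ on $\overline Q$, which forces $\overline Q\subset\Omega$ since $\overline Q$ is connected, meets $\Omega$, and is disjoint from $\partial\Omega$. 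Taking the infimum over $\overline Q$ and using $d(Q,\partial\Omega)\le\rho(x)\le 2A\,2^{-j}$ yields $\tfrac{A-\sqrt n}{\sqrt n}\,d(Q)\le d(Q,\partial\Omega)\le\tfrac{2A}{\sqrt n}\,d(Q)$, and the choice $A=\tfrac32 k\sqrt n$ together with $k\ge2$ makes $\tfrac{A-\sqrt n}{\sqrt n}=\tfrac32 k-1\ge k$ and $\tfrac{2A}{\sqrt n}=3k\le 5k$. Moreover $\mathcal F$ covers $\Omega$, because each $x\in\Omega$ lies in a unique shell $\Omega_j$ and hence in the cube of $\mathcal D_j$ containing it, which then belongs to $\mathcal F$.

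The second step is the disjointification. Since the elements of $\mathcal F$ are dyadic they are pairwise nested-or-disjoint, so it suffices to show each $Q\in\mathcal F$ is contained in a maximal element of $\mathcal F$; the collection $\mathcal V_k(\Omega)$ of all such maximal cubes is then automatically pairwise disjoint, it is countable as a subfamily of $\bigcup_j\mathcal D_j$, so it may be enumerated as $\{Q_i:i\ge1\}$. To see that a maximal cube over a given $Q\in\mathcal D_j\cap\mathcal F$ exists, note that any ancestor $Q'\in\mathcal D_{j'}\cap\mathcal F$ (so $Q\subseteq Q'$, $j'\le j$) satisfies $k\,d(Q')\le d(Q',\partial\Omega)\le d(Q,\partial\Omega)\le 5k\,d(Q)$, hence $2^{-j'}\le 5\cdot2^{-j}$ and $j-j'\le2$; thus $Q$ has at most three ancestors in $\mathcal F$, and the largest of them is the desired maximal cube. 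Properties (1) and (2) of the lemma are then immediate (using $\bigcup_{Q\in\mathcal F}\overline Q=\bigcup_i\overline{Q_i}\subset\Omega$ from the first step), and property (3) holds for each $Q_i$ because $Q_i\in\mathcal F$ and the displayed estimate was proved for every member of $\mathcal F$.

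The only points needing care are the calibration of the shell constant $A$ so that both inequalities in (3) come out with the prescribed constants $k$ and $5k$ — this is exactly where the hypothesis $k\ge2$ is used, via $\tfrac32 k-1\ge k$ — and the finiteness-of-ancestors estimate $j-j'\le2$ that guarantees existence of maximal cubes. Neither is a genuine obstacle: the lemma is a bookkeeping refinement of Stein's classical argument, and I would present it as such.
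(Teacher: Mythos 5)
Your construction is correct: the calibrated shells with $A=\tfrac32 k\sqrt n$ do give $(\tfrac32 k-1)\,d(Q)\le d(Q,\partial\Omega)\le 3k\,d(Q)$ for every cube of the preliminary family (which lies in $[k\,d(Q),5k\,d(Q)]$ exactly when $k\ge2$), and the ancestor bound $j-j'\le2$ legitimately yields maximal dyadic cubes, so properties (1)--(3) follow. The paper offers no proof of this lemma beyond citing Stein \cite{S1970}, and your argument is precisely the classical Whitney construction with the bookkeeping adjusted to the parameter $k$, i.e.\ essentially the same (cited) approach.
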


We fix a Whitney-type decomposition of $\Omega$ and a cube $Q_0$ in that decomposition and denote the center of $Q_0$ by $x_0$. For each Whitney-type cube $Q$, we choose a quasihyperbolic geodesic joining $x_0$ to the center of the cube $Q$, and the set of the Whitney-type cubes that have non-empty intersection with this geodesic is denoted by $P(Q)$. Then, the shadow of the Whitney-type cube $Q$ is defined as \[S(Q)=\bigcup_{\{Q_1\in \mathcal{W}(\Omega):Q\in P(Q_1)\}}Q_1.\]
Then, a lemma controls the size of $P(Q)$ with respect to the $\beta$-Holder domains ($0<\beta\leq 1$) was proved in \cite{KOT2002}.

\begin{lem}[\cite{KOT2002}]\label{2.7l}
For a $\beta$-H\"older domain $\Omega\subset\rr^n$ ($0<\beta\leq 1,n\geq 2$) and a constant  $\varepsilon>0$, there is a constant $C=C(\varepsilon,d(\Omega),n,k)$, such that
\[\sum_{Q\in P(Q_1)}{|Q|}^\varepsilon\leq C,\]
for any $Q_1\in \mathcal{V}_k(\Omega)$.
\end{lem}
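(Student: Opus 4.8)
\textbf{Proof proposal for Theorem \ref{t4.4}.} The plan is to follow the now-standard two-step strategy: first invoke Theorem \ref{thm:capactity characterization} to reduce the weighted fractional Sobolev-Poincar\'e inequality \eqref{eq:weighted fractional Sob Poin} with $f=\rho^a$, $g=\rho^b$ to a weighted capacity-type estimate, and then establish that capacity estimate for $\beta$-H\"older domains using the Whitney-type decomposition $\mathcal{V}_k(\Omega)$, the shadow sets $S(Q)$, and the chain-length bound in Lemma \ref{2.7l}. Fix a ball $B\subset\subset\Omega$ (we may assume $B$ is comparable to the Whitney cube $Q_0$ containing the distinguished point $x_0$, after a routine localization that connects an arbitrary such $B$ to $Q_0$ through finitely many Whitney cubes, each contributing a bounded factor). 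Let $A\subset\Omega$ be admissible with $A\cap B=\emptyset$ and let $u\in C(\Omega)$ satisfy $u|_A\geq 1$, $u|_B=0$. We must bound $\big(\int_A\rho^a\big)^{p/q}$ by $C\int_\Omega g_u^\tau(x)\rho(x)^b\,dx$.

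First I would decompose $\int_A \rho^a\,dx \leq \sum_{Q\in\mathcal{V}_k(\Omega)} \int_{A\cap Q}\rho^a$. On each Whitney cube $Q$, $\rho$ is comparable to $d(Q)=:\ell_Q$, so $\int_{A\cap Q}\rho^a \asymp \ell_Q^a |A\cap Q|$. Using the local fractional Sobolev-Poincar\'e inequality (Lemma \ref{ddd}) on an enlarged Whitney cube $\widetilde Q$, together with a telescoping along the chain $P(Q)$ of Whitney cubes from $Q_0$ to $Q$ (where consecutive cubes overlap enough to pass oscillation estimates, and $u$ vanishes on $Q_0\sim B$), one controls $|A\cap Q|$ — equivalently the measure of the set where $u\geq 1$ within $Q$ — by a weighted sum $\sum_{Q'\in P(Q)} (\text{local energy of }u \text{ on } Q')$ raised to the appropriate power, with geometric weights coming from $\ell_{Q'}$. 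Here the choice $\tau<\tau_0$ small guarantees that the balls $B(x,\tau d(x,\partial\Omega))$ appearing in $g_u^\tau$ stay inside a fixed dilate of the Whitney cube containing $x$, so that the local quantities $\int_{\widetilde Q}\int_{\widetilde Q\cap B(x,\tau\rho(x))} |u(x)-u(y)|^p|x-y|^{-n-p\delta}\,dy\,dx$ are dominated by $\int_{\widetilde Q} g_u^\tau(x)\,dx$ up to a constant.

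Next, after collecting the telescoping estimates, the left-hand side becomes (schematically) $\sum_Q \ell_Q^a |A\cap Q| \lesssim \sum_Q \ell_Q^a \big(\sum_{Q'\in P(Q)} \ell_{Q'}^{\theta} E_{Q'}\big)^{q/p}$ for a suitable exponent $\theta$ and $E_{Q'} = \int_{\widetilde{Q'}} g_u^\tau$, and one wants this $\leq C\big(\sum_{Q'} \ell_{Q'}^b E_{Q'}\big)^{q/p} = C\big(\int_\Omega g_u^\tau \rho^b\big)^{q/p}$. Reorganizing the double sum by the cube $Q'$ (each $Q'$ lies in $P(Q)$ exactly when $Q\in S(Q')$, i.e. when $Q$ is in the shadow of $Q'$), one reduces to: for each fixed $Q'$, $\sum_{Q:\, Q'\in P(Q)}\ell_Q^a \leq C\,\ell_{Q'}^{\,a+n}$, which is the statement that the shadow $S(Q')$ has weighted measure comparable to an actual $(a+n)$-power of $\ell_{Q'}$ — a standard consequence of $\beta$-H\"older geometry (the shadow $S(Q')$ is contained in a ball of radius $\asymp\ell_{Q'}^{\,\beta}$ scaled appropriately, and $\int_{S(Q')}\rho^a \lesssim \ell_{Q'}^{\,a+n}$ or the relevant $\beta$-adjusted bound). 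Combining this with Lemma \ref{2.7l} to sum over the chain lengths $P(Q')$, and using the hypothesis $p\delta-n < b < (a+n)\beta\frac{p}{q}+p\delta-n$ to make the resulting geometric series of $\ell_{Q'}$-powers converge (this is exactly where the sharp range on $b$ enters: the exponent produced by $\theta$, $a$, $b$, and $\beta$ must be strictly positive), yields $\big(\int_A\rho^a\big)^{p/q}\leq C\int_\Omega g_u^\tau\rho^b$, which is the capacity estimate. Theorem \ref{thm:capactity characterization} then gives \eqref{eq:weighted fractional Sob Poin}.

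The main obstacle I expect is bookkeeping the two intertwined summations correctly: the telescoping along each chain $P(Q)$ produces, via H\"older's inequality applied to sums of local energies with geometric weights, an exponent balance that must be reconciled simultaneously with (i) the reorganization over shadows $S(Q')$, (ii) the chain-length control of Lemma \ref{2.7l} (which only bounds $\sum_{Q\in P(Q_1)}|Q|^\varepsilon$ for $\varepsilon>0$, so every power of $\ell_{Q'}$ that survives must be strictly positive), and (iii) the passage from $p$ to $q\geq p$ which forces an application of the embedding $\ell^{p}\hookrightarrow\ell^{q}$ at the right place. Getting the precise arithmetic so that the admissible range for $b$ comes out exactly as $p\delta-n<b<(a+n)\beta\frac{p}{q}+p\delta-n$ — matching the sharpness asserted via Example 1.2 of \cite{G2017} — is the delicate part; the geometric and analytic inputs themselves (Lemmas \ref{ddd}, \ref{lemma:Whitney}, \ref{2.7l}, Theorem \ref{thm:capactity characterization}) are all in place.
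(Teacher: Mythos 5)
You have not proved the statement you were asked to prove. The target is Lemma \ref{2.7l}, the chain-summability estimate $\sum_{Q\in P(Q_1)}|Q|^{\varepsilon}\le C$ for Whitney-type chains in a $\beta$-H\"older domain; what you wrote is a proof sketch of Theorem \ref{t4.4}, and inside that sketch you invoke Lemma \ref{2.7l} itself as a known input (``combining this with Lemma \ref{2.7l} to sum over the chain lengths''). As an argument for the lemma this is empty, and if it were intended as one it would be circular: nowhere do you use the defining quasihyperbolic inequality $k_\Omega(x,x_0)\le\beta^{-1}\log\big(d(x_0,\partial\Omega)/d(x,\partial\Omega)\big)+C$ of a $\beta$-H\"older domain, which is precisely the geometric information the lemma encodes. (For the record, the paper does not reprove the lemma either; it is quoted from Koskela--Onninen--Tyson \cite{KOT2002}, so the only relevant comparison is that the lemma needs its own chain-counting proof, which your text does not supply. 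Incidentally, your sketch of Theorem \ref{t4.4} also differs from the paper's actual route in Theorem \ref{1.2t}, which chains along quasihyperbolic geodesics and then runs Vitali/Besicovitch covering arguments on the sets $\mathcal{C}$ and $\mathcal{D}$ rather than reorganizing a double sum over shadows; but that is beside the point here.)

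What a proof of Lemma \ref{2.7l} actually requires is a count of the cubes in $P(Q_1)$ by size. If $Q\in P(Q_1)$ has $d(Q)\asymp 2^{-j}$, then by the Whitney property of $\mathcal{V}_k(\Omega)$ every point $w$ of the geodesic inside a fixed dilate of $Q$ satisfies $\rho(w)\asymp 2^{-j}$, so the H\"older condition gives $k_\Omega(x_0,w)\le C(1+j/\beta)$; since quasihyperbolic distance along a geodesic equals its arclength, the entire portion of the geodesic on which $\rho\asymp 2^{-j}$ is contained in an initial segment of quasihyperbolic length at most $C(1+j/\beta)$. On the other hand, each cube of that size met by the geodesic (apart from at most the two containing the endpoints) forces the geodesic to cross a neighborhood of the cube of Euclidean width $\asymp 2^{-j}$ in which $\rho\asymp 2^{-j}$, hence contributes quasihyperbolic length bounded below by a positive constant, and these neighborhoods have bounded overlap. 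Therefore the number $N_j$ of cubes of $P(Q_1)$ with $d(Q)\asymp 2^{-j}$ is at most $C(1+j/\beta)$ uniformly in $Q_1$, and consequently $\sum_{Q\in P(Q_1)}|Q|^{\varepsilon}\le C\sum_{j\ge j_0}(1+j/\beta)\,2^{-jn\varepsilon}\le C(\varepsilon,d(\Omega),n,k)$, where $2^{-j_0}\asymp d(\Omega)$ bounds the largest admissible cube size. This counting step, driven by the $\beta$-H\"older hypothesis, is the entire content of the lemma and is absent from your proposal.
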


%

\bigskip
\section{Proofs of Theorem \ref{thm:capactity characterization}}

In this section, we shall prove Theorem \ref{thm:capactity characterization}. We first prove a weaker version of \eqref{eq:weighted fractional Sob Poin} and the proof here is similar to \cite[Proof of Theorem 1.1]{G2017}.

\begin{thm}\label{4.1l}
	Let $f,g$ be two positive continuous functions on an open set $\Omega\subset \R^n$ with $\int_{\Omega}f(x)dx<\infty$. Let $1\leq p \leq q<\infty$, $\delta\in (0,1)$ and $\tau\in(0,\infty)$. Assume that for any ball $B\subset\subset\Omega$, there exists a constant  $C=C(\Omega,B,f,g,p,q,\delta,\tau)$ such that
	\[
	\Big(\int_Af(x)dx\Big)^{\frac{p}{q}}\leq C \inf_{u}\int_\Omega g_u^\tau(x)g(x)dx,
	\]
	for any admissible set $A\subset\Omega$ with $A\cap B=\emptyset$, where the infimum is taken with respect to all the functions $u\in C(\Omega)$, satisfying that $u|_A\geq 1$ and $u|_B=0$. Then for any ball $B\subset\subset\Omega$, there is a constant $C$, such that for any $u\in C(\Omega)$ with $u|_B=0$,
	\[
	\Big(\int_\Omega{|u(x)|}^qf(x)dx\Big)^{\frac{1}{q}}\leq C\Big(\int_\Omega g_u^\tau(x)g(x)dx\Big)^{\frac{1}{p}}.
	\]
\end{thm}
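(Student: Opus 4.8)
The plan is to adapt Maz'ya's truncation method to the fractional seminorm $g_u^\tau$. First I would reduce to nonnegative $u$: since $\big||u(x)|-|u(y)|\big|\le|u(x)-u(y)|$ one has $g_{|u|}^\tau\le g_u^\tau$ pointwise, $\int_\Omega|u|^qf\,dx=\int_\Omega|u|^qf\,dx$, and $|u|$ still vanishes on $B$, so it suffices to prove the inequality for $u\in C(\Omega)$ with $u\ge0$ and $u|_B=0$. For $j\in\Z$ put $A_j=\{x\in\Omega:u(x)>2^j\}$, an open set disjoint from $B$, and introduce the truncations $u_j=\min\{(u-2^j)_+,2^j\}$ together with their normalizations $v_j=2^{-j}u_j$. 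Each $v_j$ is continuous, $v_j|_B=0$, and $v_j\equiv1$ on $A_{j+1}$, so $v_j$ is an admissible competitor for the capacity of (exhaustions of) $A_{j+1}$.

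Next I would feed the $v_j$'s into the capacity hypothesis. Since $A_{j+1}$ need not itself be admissible, I would first exhaust it from inside by admissible open sets $G$ with $\overline G\subset A_{j+1}$ (smoothing a finite union of dyadic cubes, or taking a regular value of a mollified indicator): each such $G$ satisfies $G\cap B=\emptyset$, $v_j|_G\ge1$, $v_j|_B=0$, hence $\big(\int_Gf\,dx\big)^{p/q}\le C\int_\Omega g_{v_j}^\tau g\,dx$, and letting $G\uparrow A_{j+1}$ and using monotone convergence together with the $p$-homogeneity $g_{cw}^\tau=c^pg_w^\tau$ gives $\big(\int_{A_{j+1}}f\,dx\big)^{p/q}\le C\,2^{-jp}\int_\Omega g_{u_j}^\tau g\,dx$. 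On the other hand, the dyadic annuli $\{2^j<u\le2^{j+1}\}$ partition $\{u>0\}$, so $\int_\Omega u^qf\,dx\le\sum_{j\in\Z}2^{(j+1)q}\int_{A_j}f\,dx$; bounding each $\int_{A_j}f\,dx$ by the capacity estimate (applied with $j-1$ in place of $j$) makes the powers $2^{jq}$ cancel, and since $q\ge p$ the elementary inequality $\sum_j t_j^{q/p}\le(\sum_j t_j)^{q/p}$ for $t_j\ge0$ lets me pull the sum out of the $q/p$-power: $\int_\Omega u^qf\,dx\le C'\big(\sum_j\int_\Omega g_{u_{j-1}}^\tau g\,dx\big)^{q/p}=C'\big(\int_\Omega\big(\sum_j g_{u_{j-1}}^\tau(x)\big)g(x)\,dx\big)^{q/p}$.

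The closing ingredient is the pointwise super-additivity bound $\sum_{j\in\Z}g_{u_j}^\tau(x)\le g_u^\tau(x)$. To prove it, write $u_j=\phi_j(u)$ with $\phi_j$ non-decreasing and $1$-Lipschitz; one checks that $\sum_j\phi_j(t)=t$ for every $t\ge0$, so for any $a,b$ the differences $u_j(a)-u_j(b)$ all have the same sign and $\sum_j|u_j(a)-u_j(b)|=|u(a)-u(b)|$, whence $\sum_j|u_j(a)-u_j(b)|^p\le\big(\sum_j|u_j(a)-u_j(b)|\big)^p=|u(a)-u(b)|^p$ because $p\ge1$; integrating against $|x-y|^{-n-p\delta}$ over $\Omega\cap B(x,\tau\rho(x))$ and summing in $j$ by Tonelli yields the claim. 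Plugging it in and taking $q$-th roots gives $\big(\int_\Omega|u|^qf\,dx\big)^{1/q}\le C\big(\int_\Omega g_u^\tau g\,dx\big)^{1/p}$, as required. I expect the main obstacle to be purely organizational — replacing the level sets by admissible sets without losing $f$-mass, and tracking the dyadic sums so that the $2^{jq}$ factors telescope — while the one genuinely structural point is the super-additivity estimate $\sum_j|u_j(x)-u_j(y)|^p\le|u(x)-u(y)|^p$, which is exactly what makes truncation interact with $g_u^\tau$ the way it does with the Dirichlet energy in the classical argument.
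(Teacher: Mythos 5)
Your argument is correct and it follows the same Maz'ya truncation skeleton as the paper (the identical truncations $u_j=\min\{2^j,(u-2^j)_+\}$, the dyadic decomposition $\int|u|^q f\le\sum_j 2^{(j+1)q}\int_{A_j}f$, the use of $q\ge p$ to pull the sum inside the $q/p$-power), but you close the estimate by a genuinely different and slicker device. After reducing to bounding $\sum_k\int_\Omega g_{u_k}^\tau g\,dx$, the paper splits each $\int g_{u_k}^\tau g$ into two sums $I_1^k,I_2^k$ indexed by pairs of dyadic shells $A_i\times A_j$, invokes the pointwise comparison $|u_k(y)-u_k(z)|\le 4\cdot 2^{k-j}|u(y)-u(z)|$ for $i\le k\le j$, and then sums a geometric series in $k$. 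You instead observe the cleaner pointwise super-additivity $\sum_k|u_k(y)-u_k(z)|^p\le|u(y)-u(z)|^p$, which follows because the $\phi_j$ are all non-decreasing (so the differences $u_j(y)-u_j(z)$ share a sign), satisfy $\sum_j\phi_j=\mathrm{id}$ on $[0,\infty)$, and $\sum t_j^p\le\bigl(\sum t_j\bigr)^p$ for $t_j\ge 0$, $p\ge1$; Tonelli then gives $\sum_k g_{u_k}^\tau(x)\le g_u^\tau(x)$ directly. This replaces the paper's $I_1^k/I_2^k$ bookkeeping with one line and even sharpens the constant (you get $1$ where the paper gets $\tfrac{4^p}{1-2^{-p}}$-type factors). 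You also handle a detail the paper silently glosses over: the level sets $A_{j+1}$ need not be admissible, and your inner exhaustion by admissible $G\subset\subset A_{j+1}$ plus monotone convergence is the right fix. Overall your route is a simplification of the paper's proof, not merely a restatement of it.
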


\begin{proof}
Fix an arbitrary ball $B_0\subset\subset\Omega$. Assume that $u\geq 0$ is a continuous function with $u|_{B_0}=0$. For any $j\in\zz$, define \[u_j(x)=\min\{2^j,\max\{0,u(x)-2^j\}\}.\]
It is clear that $2^{-j}u_j|_{B_0}=0$ and $2^{-j}u_j|_{F_j}\geq 1$, where $F_j=\{x\in\Omega:u(x)\geq 2^{j+1}\}.$ Moreover, for $u_j$, we can use the assumption to find:
\[{\Big(\int_{F_j} f(x)dx\Big)}^{\frac{p}{q}}\leq C \int_{\Omega} g_{2^{-j}u_j}gdx,\]
where for simplicity we omitted the superscript $\tau$ in $g_u^\tau$. For $k\in\zz$, we define $A_k:=F_{k-1}\backslash F_k$. For $y\in\Omega$, set $B_y=B(y,\tau \rho(y))$. \begin{equation}\label{4321}
	\begin{aligned}\int_{\Omega}{|u(x)|}^qf(x)dx&\leq \sum_{k=-\infty}^{+\infty} 2^{(k+1)q}\int_{A_k}f(x)dx \\& \leq C \sum_{k=-\infty}^{+\infty} {\Big(\int_{\Omega} g_{u_k}gdx\Big)}^{\frac{q}{p}}\\& \leq C {\Big(\sum_{k=-\infty}^{+\infty}\int_{\Omega} g_{u_k}gdx\Big)}^{\frac{q}{p}}\\& \leq C {\Big(\sum_{k=-\infty}^{+\infty}(I_1^k+I_2^k)\Big)}^{\frac{q}{p}}\end{aligned},
\end{equation}
where \[I_1^k=\sum_{i\leq k}\sum_{j\geq k}\int_{A_i}\int_{A_j\cap B_y}\frac{{|u_k(y)-u_k(z)|}^pg(y)}{{|y-z|}^{n+p\delta}}dzdy,\]
and
\[I_2^k=\sum_{i\geq k}\sum_{j\leq k}\int_{A_i}\int_{A_j\cap B_y}\frac{{|u_k(y)-u_k(z)|}^pg(y)}{{|y-z|}^{n+p\delta}}dzdy.\]

For any $y\in A_i$ and $z\in A_j$ with $j-1>i$, \[|u(y)-u(z)|\geq |u(y)|-|u(z)|\geq 2^j-2^{i+1}\geq 2^{j-1}.\]
Moreover, for such $y,z$, \begin{equation}\label{12345}
|u_k(y)-u_k(z)|\leq 2^{k+1} \leq 4\:2^{k-j}|u(y)-u(z)|.
\end{equation}
Since, for each $k\in\zz$, \[|u_k(z)-u_k(y)|\leq |u(y)-u(z)|,\] then for any $i\leq k\leq j$, $y\in A_i$ and $z\in A_j$,
\eqref{12345} holds.
Thus, \[\sum_{k=-\infty}^{+\infty}I_1^k\leq 4^p\sum_{k=-\infty}^{+\infty}\sum_{i\leq k}\sum_{j\geq k} 2^{p(k-j)}\int_{A_i}\int_{A_j\cap B_y}\frac{g(y){|u(y)-u(z)|}^p}{{|y-z|}^{n+p\delta}}dzdy.\]
As $\sum_{k=i}^j2^{p(k-j)}\leq {(1-2^{-p})}^{-1},$ by changing the order of summation, we obtain
\[\sum_{k=-\infty}^{+\infty}I_1^k\leq \frac{4^p}{1-2^{-p}}\int_{\Omega}g_u(y)g(y)dy.\]
By a similar reason, we obtain that \[\sum_{k=-\infty}^{+\infty}I_2^k\leq C\int_{\Omega}g_u(y)g(y)dy.\]
Therefore, by \eqref{4321} and the above two equations, we conclude that
\[\int_{\Omega}{|u(x)|}^qf(x)dx\leq C{\Big(\int_{\Omega}g_u(y)g(y)dy\Big)}^{\frac{q}{p}}.\]
From the above conclusion, it is easy to prove  that for any ball $B\subset\subset\Omega$ there exists a constant $C$ such that for each $u\in C(\Omega)$ with $u|_B=0$, \[{\Big(\int_{\Omega}{|u|}^qfdx\Big)}^{\frac{1}{q}}\leq C {\Big(\int_{\Omega}g_ugdx\Big)}^{\frac{1}{p}}.\]

\end{proof}

We next show that the conclusion of Theorem \ref{4.1l} implies the weighted fractional Sobolev-Poincar\'e inequality \eqref{eq:weighted fractional Sob Poin}.
\begin{thm}\label{4.3l}
	Let $f,g$ be two positive continuous functions on an open set $\Omega\subset \R^n$ with $\int_{\Omega}f(x)dx<\infty$. Let $1\leq p \leq q<\infty$, $\delta\in (0,1)$. Then there exists a constant $\tau_0=\tau(\Omega)$ such that if for any ball $B\subset\subset\Omega$, there is a constant $C$ so that for any $u\in C(\Omega)$ with $u|_B=0$, it holds
	\[
	\Big(\int_\Omega{|u(x)|}^qf(x)dx\Big)^{\frac{1}{q}}\leq C{\Big(\int_\Omega g_u^\tau(x)g(x)dx\Big)}^{\frac{1}{p}},
	\]
	then there exists a constant $C$ such that for each $\tau\in (0,\tau_0)$ and each $u\in C(\Omega)$,
	\[{\Big(\int_\Omega{|u(x)-u_{\Omega,f}|}^{q}f(x)dx\Big)}^{\frac{1}{q}}\leq C {\Big(\int_\Omega g_u^\tau(x)g(x)dx\Big)}^{\frac{1}{p}}.\]
	
\end{thm}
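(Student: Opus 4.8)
\textbf{Proof proposal for Theorem \ref{4.3l}.}

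The plan is to pass from the ``vanishing on a ball'' estimate to the genuine Poincar\'e-type estimate by a standard truncation-plus-triangle-inequality argument, where the only subtle point is to keep control of the fractional energy $\int_\Omega g_u^\tau g\,dx$ when we replace $u$ by $u-u_{\Omega,f}$ and by suitable truncations. First I would fix a ball $B_0\subset\subset\Omega$ and observe that the hypothesis is translation-invariant under adding constants, so it applies to $v:=u-c$ for any constant $c$ whenever $v|_{B_0}=0$; but of course a generic $u$ does not vanish on $B_0$, so the first genuine step is to choose $\tau_0$ small and $B_0$ so that on $2B_0$ (say) the function $u$ can be compared to its average. Concretely I would pick $\tau_0$ so small that for $\tau<\tau_0$ the balls $B_y=B(y,\tau\rho(y))$ with $y\in B_0$ stay inside a fixed slightly larger ball $B_0'\subset\subset\Omega$; then Lemma \ref{ddd} applied on $B_0'$ gives
\[
\Big(\int_{B_0'}|u-u_{B_0'}|^q\,dx\Big)^{1/q}\le C\Big(\int_{B_0'}g_u^\tau(x)\,dx\Big)^{1/p}\le C'\Big(\int_{\Omega}g_u^\tau(x)g(x)\,dx\Big)^{1/p},
\]
the last step using that $g$ is continuous and hence bounded below by a positive constant on the compact set $B_0'$. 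This controls the oscillation of $u$ near $B_0$.

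Next I would split $u-u_{B_0'}$ into a part that vanishes on $B_0$ and a bounded remainder. Let $\varphi\in C(\Omega)$ be a fixed cutoff with $\varphi=1$ on $B_0$, $\varphi=0$ outside $B_0'$, $0\le\varphi\le 1$; then write $w:=u-u_{B_0'}$ and decompose $w=(w-\varphi\, w_{B_0,\text{something}})$... actually the cleaner route, and the one I would follow, is the truncation used in the proof of Theorem \ref{4.1l}: set
\[
\widetilde u := \begin{cases} w - \lambda, & w>\lambda,\\ 0, & -\lambda\le w\le\lambda,\\ w+\lambda, & w<-\lambda,\end{cases}
\]
with $\lambda$ chosen (using the local estimate above and Chebyshev) so that the set $\{|w|\le\lambda\}$ has positive measure intersection with $B_0'$ — in fact one wants $\widetilde u$ to vanish on a genuine ball $B\subset\subset\Omega$, which requires a little care. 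A robust way to guarantee a vanishing ball is to instead compare $u$ to a value $u(x_*)$ at a point $x_*\in B_0$: since $u$ is continuous, for the ball $B:=B(x_*,r)$ with $r$ small, $|u-u(x_*)|$ is as small as we like on $B$, but that does not make it vanish. So I would use the truncation $\widetilde u$ above with the level $\lambda$ taken so large (comparable to the right-hand side, divided by $|B_0|^{1/q}$) that $\widetilde u\equiv 0$ on $B_0$; this is possible because $\sup_{B_0}|w|\le \lambda$ once $\lambda$ exceeds the local oscillation bound. Then $\widetilde u|_{B_0}=0$, so the hypothesis gives
\[
\Big(\int_\Omega|\widetilde u|^q f\,dx\Big)^{1/q}\le C\Big(\int_\Omega g_{\widetilde u}^\tau g\,dx\Big)^{1/p}\le C\Big(\int_\Omega g_{u}^\tau g\,dx\Big)^{1/p},
\]
the last inequality because truncation is $1$-Lipschitz, hence $|\widetilde u(y)-\widetilde u(z)|\le|u(y)-u(z)|$ pointwise and so $g_{\widetilde u}^\tau\le g_u^\tau$.

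Then I would combine: $u-u_{B_0'} = \widetilde u + r$ where $|r|\le\lambda$ everywhere (in fact $r$ is supported where $|w|\le\lambda$, value $w$, plus the truncated tails of size $\lambda$), so
\[
\Big(\int_\Omega|u-u_{B_0'}|^q f\,dx\Big)^{1/q}\le \Big(\int_\Omega|\widetilde u|^q f\,dx\Big)^{1/q}+\lambda\Big(\int_\Omega f\,dx\Big)^{1/q},
\]
and both terms on the right are $\le C(\int_\Omega g_u^\tau g\,dx)^{1/p}$ — the first by the displayed estimate, the second because $\lambda$ was chosen proportional to that quantity and $\int_\Omega f<\infty$. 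Finally, replacing the centering constant $u_{B_0'}$ by $u_{\Omega,f}$ costs only a further triangle inequality, since
\[
|u_{\Omega,f}-u_{B_0'}| = \Big|\frac{\int_\Omega (u-u_{B_0'})f\,dx}{\int_\Omega f\,dx}\Big|\le \Big(\int_\Omega f\,dx\Big)^{-1}\Big(\int_\Omega|u-u_{B_0'}|^q f\,dx\Big)^{1/q}\Big(\int_\Omega f\,dx\Big)^{1/q'},
\]
by H\"older, which is again bounded by the right-hand side; and $u_{\Omega,f}$ is the constant minimizing $\int_\Omega|u-c|^q f\,dx$ only for $q=2$, but for general $q$ we just use $\|u-u_{\Omega,f}\|\le\|u-u_{B_0'}\|+|u_{B_0'}-u_{\Omega,f}|\,\|1\|$ in $L^q(f)$.

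\textbf{Main obstacle.} I expect the delicate step to be the choice of $\tau_0$ together with arranging that the truncation $\widetilde u$ genuinely vanishes on a fixed ball $B\subset\subset\Omega$ while simultaneously the truncation level $\lambda$ is dominated by the right-hand side $(\int_\Omega g_u^\tau g)^{1/p}$ — this is where the local fractional Poincar\'e inequality (Lemma \ref{ddd}), the positivity and continuity of $g$ on compacta, and the smallness of $\tau_0$ (to keep the fractional balls $B_y$ inside a fixed compact neighbourhood) all have to be used in concert. The monotonicity $g_{\widetilde u}^\tau\le g_u^\tau$ under $1$-Lipschitz truncation is the one structural fact that makes the whole scheme go through cleanly, and I would state it explicitly as the key observation.
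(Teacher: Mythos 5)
You choose the truncation level $\lambda$ ``comparable to the right-hand side, divided by $|B_0|^{1/q}$'' and then claim that $\sup_{B_0}|w|\le\lambda$, so that $\widetilde u\equiv 0$ on $B_0$. But the only local estimate at your disposal is the fractional Sobolev--Poincar\'e inequality on balls (Lemma \ref{ddd}), which gives a bound of the form
\[
\int_{B_0'}|w|^q\,dx\le C\Big(\int_{B_0'}g_u^\tau\,dx\Big)^{q/p},
\]
i.e.\ an $L^q$ bound on the oscillation, not an $L^\infty$ bound. An $L^q$ bound controls, via Chebyshev, the measure of the set $\{|w|>\lambda\}$, but it does not prevent that set from intersecting $B_0$; indeed, for $\delta p<n$ (the relevant subcritical range here) functions in the corresponding fractional Sobolev class need not be bounded, so no choice of $\lambda$ proportional to the right-hand side can guarantee $\sup_{B_0}|w|\le\lambda$. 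Without that, $\widetilde u$ does not vanish on $B_0$ (or on any a priori fixed ball), the hypothesis cannot be invoked, and the chain breaks. Your remainder $r$ is still bounded by $\lambda$, but that $\lambda$ is not the one you can afford; with the $\lambda$ you can afford, the truncation only vanishes on a set of positive measure, which the hypothesis (a ball condition with constant depending on the ball) does not accept.

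The paper sidesteps this exactly at the point where you hesitated and then turned away: it uses a spatial cutoff rather than a range truncation. Writing $w=v-v_{3B_0}$, $\phi$ a fixed smooth cutoff equal to $1$ on $B_0$ and supported in $2B_0$, the decomposition is $w=\phi w+(1-\phi)w=:v_1+v_2$. Then $v_2$ vanishes identically on $B_0$ by construction (no $L^\infty$ information about $w$ is needed), so the hypothesis applies to $v_2$. The price is that one must bound $g_{v_2}^\tau$, and this is done by: (i) choosing $\tau_0$ so small that for $x\in\Omega\setminus 3B_0$ the ball $B(x,\tau\rho(x))$ misses $2B_0$, whence $g_{v_2}^\tau=g_v^\tau$ there; and (ii) on $3B_0$, using the Lipschitz bound for $\phi$ to show $g_{v_2}^\tau(x)\le C|w(x)|^p+Cg_v^\tau(x)$, after which the extra term $\int_{3B_0}|w|^p g$ is again controlled by Lemma \ref{ddd}. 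None of these steps requires pointwise control of $w$. Your observation that $1$-Lipschitz truncation gives $g_{\widetilde u}^\tau\le g_u^\tau$ is correct and is indeed the right tool in the proof of the preceding Theorem \ref{4.1l}, but here it is the wrong modification: what one needs is a modification that vanishes on a fixed ball by construction, not by size.

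Your final step (passing from the centering constant $u_{B_0'}$ to $u_{\Omega,f}$ via H\"older and a triangle inequality, using $\int_\Omega f<\infty$) is fine and matches the paper. If you replace the truncation by the spatial cutoff decomposition you briefly sketched before abandoning it, the argument closes.
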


\begin{proof}
Since $\Omega$ is bounded, there exist a constant $k=k(\Omega)$ and a ball $B_0=B(x_0,r_0)$ such that $3B_0\subset\subset\Omega\subset kB_0.$ Set $\tau_0=\tau_0(\Omega)=\frac{1}{k-3}$ and assume $\tau<\tau_0$.

Take a smooth function $\phi$ such that $0\leq\phi\leq 1$ on $\Omega$, $\phi|_{B_0}=1$ and the support of $\phi$ is contained in $2B_0$. For any $v\in C(\Omega)$, we may decompose $v-v_{3B_0}$ as \[v-v_{3B_0}=\phi(v-v_{3B_0})+(1-\phi)(v-v_{3B_0})=:v_1+v_2.\]
It is clear that $v_2|_{B_0}=0$ and $v_2|_{\Omega\backslash 2B_0}=v-v_{3B_0}.$
Therefore, \[\int_\Omega {|v-v_{3B_0}|}^qf\leq C\int_{3B_0}{|v_1|}^qf+C\int_\Omega {|v_2|}^qf=I_1+I_2.\]
By the fractional Sobolev-Poincar\'e inequality on balls (see Lemma \ref{ddd}), we have
\[I_1\leq C\int_{3B_0} {|v-v_{3B_0}|}^qf\leq C \int_{3B_0} {|v-v_{3B_0}|}^q\leq C {\Big(\int_{3B_0} g_v\Big)}^{\frac{q}{p}}\leq C {\Big(\int_{3B_0} g_vg\Big)}^{\frac{q}{p}}.\]
Furthermore, since $v_2|_{B_0}=0$, by the assumption of the theorem,  \[\begin{aligned}
	I_2^{\frac{p}{q}}&\leq  C\int_{\Omega}g_{v_2}g= C\int_{\Omega\backslash 3B_0}g_{v_2}g+C\int_{3B_0}g_{v_2}g \\
	&=C\int_{\Omega\backslash 3B_0}g_vg+C\int_{3B_0}g_{v_2}g,
\end{aligned}\]
where in  the last line of the above inequality we used the fact that $g_{v_2}=g_{v}$ on $\Omega\backslash 3B_0$. This follows from the fact that $v=v_2$ on $\Omega\backslash 2B_0$ and that if $x\in \Omega\backslash 3B_0$ and $y\in B(x,\tau\rho(x))$, then $y\notin 2B_0$, since otherwise we would have \[|x-y|<\tau\rho(x)\leq \tau(k-3)r_0\leq |x-y|,\]
which is clearly a contradiction.

We next estimate the term $\int_{3B_0}g_{v_2}g.$
Note that \[
\begin{aligned}
|v_2(x)-v_2(y)|\leq |(1-\phi(x))(v(x)&-v_{3B_0})-(1-\phi(y))(v(x)-v_{3B_0})|\\
&+|(1-\phi(y))(v(x)-v_{3B_0})-(1-\phi(y))(v(y)-v_{3B_0})|.
\end{aligned}
\]
An easy computation leads to
\[g_{v_2}(x)=\int_{\Omega\cap B(x,\tau\rho(x))}\frac{{|v_2(x)-v_2(y)|}^p}{{|x-y|}^{n+\delta p}}dy\leq C{|v(x)-v_{3B_0}|}^p+Cg_v(x),\]
where $C$ depends only on the data of $\phi$.
Therefore, applying the fractional Sobolev-Poincar\'e inequality on balls again, we infer that
\[\int_{3B_0}g_{v_2}g\leq C\int_{3B_0}g_{v}g+C\int_{3B_0}{|v-v_{3B_0}|}^pg\leq C\int_{3B_0}g_{v}g.\]
The conclusion follows from the previous estimates by noticing the elementary inequality
$$\int_{\Omega}|v-v_{\Omega,f}|^qfdx\leq C(q)\int_{\Omega}|v-v_{3B_0}|^qfdx.$$

\end{proof}

\begin{proof}[Proof of Theorem \ref{thm:capactity characterization} ]
	This follows immediately from Theorem \ref{4.1l} and \ref{4.3l}.
\end{proof}
\bigskip
\section{Weighted fractional Sobolev-Poincar\'e inequality in $s$-John domains}
We first prove a weighted capacity estimate for $s$-John domains.
\begin{thm}\label{1.1t}
	Suppose that $\Omega$ is an $s$-John domain. Let $0<\delta<1$, $\tau>0$, $1\leq p\leq q$, $a\geq 0$, $b>p\delta-n$ be constants, satisfying that $q<\frac{(n+a)p}{s(n+b-\delta p)+(s-1)(p-1)}$. Then for each ball $B\subset\subset\Omega$ there exists a constant $C=C(\Omega,p,q,a,b,\delta,\tau,B)$ such that
	\begin{equation}
		{\Big(\int_A {\rho(x)}^adx\Big)}^{\frac{p}{q}}\leq C\:\inf\int_{\Omega}g_u^\tau(x){\rho(x)}^b dx,
	\end{equation}
	for every admissible subset $A\subset\Omega$, satisfying that $A\cap B=\emptyset$. Here, the infimum is taken over all the functions $u\in C(\Omega)$ such that $u|_A\geq 1$ and $u|_B=0$.
\end{thm}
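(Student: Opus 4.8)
The plan is to pass, via the chain lemma (Lemma \ref{wwe}), from the competitor $u$ to a Riesz-potential-type pointwise inequality on $A$, and then to read off the claimed estimate as a weighted weak-type $(1,q/p)$ bound for the resulting potential operator, the stated range of $q$ being exactly the condition under which that bound holds. First I fix an admissible $A$ with $A\cap B=\emptyset$ and a competitor $u\in C(\Omega)$ with $u|_A\ge 1$, $u|_B=0$; I may assume $u\ge 0$ (replacing $u$ by $\max\{u,0\}$ does not increase $g_u^\tau$) and $\int_\Omega g_u^\tau\rho^b<\infty$, and after rescaling that $\diam\Omega\le 1$. Put $h:=g_u^\tau\rho^b$ and $\beta_0:=n+b-\delta p>0$. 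I choose the constant in Lemma \ref{wwe} large enough that the initial ball $B_0=B(x_0,\rho(x_0)/4M)$ (with $x_0$ the centre of $B$) lies in $B$, so $u_{B_0}=0$, and at least $(4\tau)^{-1}$, so the terminal ball $B_k=B(x,\rho(x)/4M)$ of the chain of any $x$ lies inside $B(x,\tau\rho(x))$.

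\emph{Step 1 (pointwise estimate on $A$).} For $x\in A$ I take the chain $B_0,\dots,B_k$ of Lemma \ref{wwe}. Telescoping and using the overlap property (1), $1=u(x)-u_{B_0}\le|u(x)-u_{B_k}|+C\sum_{i=0}^{k-1}|B_i|^{-1}\int_{B_i}|u-u_{B_i}|$. On each $B_i$ the ball fractional Poincar\'e inequality (Lemma \ref{ddd} with exponent $p$; applicable because property (3) gives $\rho\ge Mr_i\ge r_i$ on $B_i$, so the local seminorm there is dominated by $g_u^\tau$) combined with property (7) ($\rho\asymp r_i$ on $B_i$) yields $|B_i|^{-1}\int_{B_i}|u-u_{B_i}|\le Cr_i^{-\beta_0/p}\big(\int_{B_i}h\big)^{1/p}$. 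For the terminal oscillation, which cannot be controlled by pointwise values of fractional Sobolev functions, I dyadically shrink $B_k\supset B^{(1)}\supset\cdots$ to $x$ and apply Lemma \ref{ddd} in \emph{averaged} form, $|B'|^{-1}\int_{B'}|u-u_{B'}|\le Cr(B')^{\delta}\big(|B'|^{-1}\int_{B'}g_u^\tau\big)^{1/p}$; since $\delta>0$ the geometric series converges, and since $\rho\asymp\rho(x)$ on all the $B^{(l)}$ it produces a term of the same shape $r_k^{-\beta_0/p}(\int_{B_k}h)^{1/p}$ when $\delta p>n$ and, in general, a restricted maximal term $\rho(x)^{\delta-b/p}\big(\mathcal M_{c\rho(x)}h(x)\big)^{1/p}$, where $\mathcal M_\varrho h(x)=\sup_{0<t\le\varrho}|B(x,t)|^{-1}\int_{B(x,t)}h$. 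Altogether, for $x\in A$,
\[ 1\le C\sum_{i=0}^{k}r_i^{-\beta_0/p}\Big(\int_{B_i}h\Big)^{1/p}+C\,\rho(x)^{\delta-b/p}\big(\mathcal M_{c\rho(x)}h(x)\big)^{1/p}. \]

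\emph{Step 2 (potential reformulation and conclusion).} Grouping the chain balls by dyadic radius $r_i\asymp 2^{-j}$, the counting property (6) gives $\lesssim 2^{j(s-1)/s}$ of them at scale $2^{-j}$ when $s>1$ (only $\lesssim j$ when $s=1$, inserting a harmless power of $j$). H\"older inside each scale and then across scales shows that for every $\epsilon$ with $\epsilon p>\beta_0+\frac{(s-1)(p-1)}{s}$ one has $1\lesssim\sum_i r_i^{-\epsilon p}\int_{B_i}h$; then properties (2), (4), (7) ($d(x,B_i)\lesssim r_i^{1/s}$, bounded overlap, $\rho\asymp r_i$ on $B_i$) give, for all $x\in A$,
\[ 1\le C\int_{E_x}\rho(y)^{-\epsilon p}h(y)\,dy+C\,\rho(x)^{\delta-b/p}\big(\mathcal M_{c\rho(x)}h(x)\big)^{1/p},\qquad E_x:=\{y\in\Omega:|x-y|\le C_0\rho(y)^{1/s}\}. \]
Split $A=A_1\cup A_2$ according to which term is $\ge\tfrac12$. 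On $A_2$, $1\le C\,Th(x)$ with $Th(x)=\int_\Omega\mathcal K(x,y)h(y)\,dy$ and $\mathcal K(x,y)=\rho(y)^{-\epsilon p}\chi_{\{|x-y|\le C_0\rho(y)^{1/s}\}}$, so Minkowski's inequality in $L^{q/p,\infty}(\rho^a\,dx)$ (legitimate since $q\ge p$, with the obvious modification when $q=p$) gives $\|Th\|_{L^{q/p,\infty}(\rho^a dx)}\le\big(\sup_{y\in\Omega}\|\mathcal K(\cdot,y)\|_{L^{q/p,\infty}(\rho^a dx)}\big)\|h\|_{L^1}$, and because $a\ge 0$, $\|\mathcal K(\cdot,y)\|_{L^{q/p,\infty}(\rho^a dx)}=\rho(y)^{-\epsilon p}\big(\int_{B(y,C_0\rho(y)^{1/s})}\rho^a\big)^{p/q}\le C\rho(y)^{-\epsilon p+(n+a)p/(sq)}$, which is bounded uniformly in $y$ exactly when $\epsilon p\le\frac{(n+a)p}{sq}$. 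Such an $\epsilon$ exists alongside the lower bound above \emph{precisely because} $q\big[s(n+b-\delta p)+(s-1)(p-1)\big]<(n+a)p$; hence $\int_{A_2}\rho^a\le(\rho^a dx)(\{Th\ge\tfrac{1}{C}\})\le C\|h\|_{L^1}^{q/p}$, i.e.\ $\big(\int_{A_2}\rho^a\big)^{p/q}\le C\int_\Omega g_u^\tau\rho^b$. The set $A_1$ (nonempty only when $\delta p\le n$) is handled by a parallel weighted weak-type $(1,q/p)$ estimate for $h\mapsto\rho^{\delta-b/p}(\mathcal M_{c\rho}h)^{1/p}$, using $a\ge 0$, the consequence $\beta_0<n+a$ of the hypothesis, and the self-improving structure of $h=g_u^\tau\rho^b$. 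Adding the two estimates and taking the infimum over admissible $u$ proves the theorem.

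\emph{Main obstacle.} The delicate point is the treatment of the terminal ball: bounding the oscillation of $u$ between $B_k$ and the point $x$ with no pointwise control of $u$, which is what forces the averaged fractional Poincar\'e inequality (admissible only because $\delta>0$) and, when $\delta p\le n$, a restricted maximal term whose correct weighted weak-type $(1,q/p)$ control is the heart of the matter. Secondary technical points are keeping the weight bookkeeping consistent — $\rho\asymp r_i$ on the chain balls but $\rho\asymp\rho(x)$ on the dyadic balls shrinking to $x$ — and carrying the logarithmic ball count through the two H\"older steps when $s=1$.
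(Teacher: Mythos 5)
Your overall architecture — chain lemma, H\"older across the chain with the $s$-dependent ball count, convert the chain sum to a potential $\int_{\{|x-y|\lesssim\rho(y)^{1/s}\}}\rho(y)^{-\epsilon p}h(y)\,dy$, and read off a weighted weak-type $(1,q/p)$ bound — does match the paper's $\mathscr C$-part, and your Minkowski-in-$L^{q/p,\infty}$ computation with $\sup_y\|\mathcal K(\cdot,y)\|_{L^{q/p,\infty}(\rho^a dx)}\lesssim\rho(y)^{-\epsilon p+(n+a)p/(sq)}$ is a clean repackaging of the paper's Vitali selection at the scale $R_x$; the exponent window you isolate is exactly the one the paper produces with its auxiliary constant $\Delta$.

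However, there is a genuine gap in your treatment of the terminal ball, i.e.\ the set $A_1$. You insist on controlling $u(x)-u_{B_0}$ pointwise, which forces you to bound $|u(x)-u_{B_k}|$ and produces the restricted maximal term $\rho(x)^{\delta-b/p}\big(\mathcal M_{c\rho(x)}h(x)\big)^{1/p}$. But the claimed ``parallel weighted weak-type $(1,q/p)$ estimate'' for this operator does not hold in the range $q>p$: if at $x\in A_1$ the maximal average is attained on $B(x,t_x)$ with $t_x\ll\rho(x)$, then $\int_{B(x,t_x)}h\gtrsim t_x^n\rho(x)^{b-\delta p}$ only yields, after Vitali/Besicovitch and $(q/p)$-convexity, the termwise requirement $t_x^{n(q/p-1)}\rho(x)^{(b-\delta p)q/p-a}\gtrsim 1$; since $n(q/p-1)>0$ and $t_x$ can be arbitrarily small relative to $\rho(x)$, this fails. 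The maximal function throws away the scale information needed for the weighted bound. The paper sidesteps this entirely by the Maz'ya-type dichotomy $A=\mathscr C\cup\mathscr D$ with $\mathscr C=\{u_{B_x}\ge\tfrac12\}$ and $\mathscr D=\{u_{B_x}<\tfrac12\}$: on $\mathscr C$ the chain estimate $|u_{B_k}-u_{B_0}|\ge\tfrac12$ requires no terminal oscillation, and on $\mathscr D$ one applies the $(q,p)$ ball Poincar\'e inequality (Lemma~\ref{ddd}) \emph{at the fixed scale} $r_x\asymp\rho(x)$ on $B_x$, using $|u-u_{B_x}|\ge\tfrac12$ on $A\cap B_x$ to get $|A\cap B_x|\lesssim r_x^{n+\frac{q}{p}(p\delta-n)}\big(\int_{B_x}g_u\big)^{q/p}$, and then the Besicovitch count closes the estimate because the scale is pinned to $\rho(x)$ and the exponent $a+n-\tfrac qp\beta_0>0$ is exactly what the hypothesis on $q$ supplies. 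You should replace your $A_1/A_2$ split by this $\mathscr C/\mathscr D$ split; as written your $A_1$ argument is not a proof.
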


\begin{proof}
The proof here is simialr to \cite[Proof of Theorem 1.2]{G2017}. As \[\frac{n+a}{s(n+b-\delta p)+(s-1)(p-1)}> \frac{q}{p}\geq 1,\] we may choose $\Delta>0$ so that \[\frac{n+a}{s(n+b-\delta p)+(s-1)(p-1)+2\Delta}=\frac{q}{p}.\]

When dealing with $B_0\subset\subset\Omega$, without loss of generality, we may assume that $B_0=B(x_0,\frac{\rho(x_0)}{4M})$, where $M>1$ and $M>\frac{2}{\tau}$. In order to prove this theorem, it suffices to verify that there exists a constant $C$ such that for every admissible subset $A$ of $\Omega$ with $A\cap B_0=\emptyset$, it holds
\begin{equation*}
		{\Big(\int_A {\rho(x)}^adx\Big)}^{\frac{p}{q}}\leq C \int_{\Omega}g_u(x){\rho(x)}^b dx,
	\end{equation*}
	where $u\in C(\Omega)$, $u|_A\geq1$ and $u|_{B_0}=0$. Up to a similarity of $\rr^n$, we may assume that $d(\Omega)=1$.
	For any $x\in A$, there exists a chain of finite balls $B_0$,$B_1$,...,$B_k$ satisfying the conditions of Lemma~\ref{wwe}. In particular, for any $y\in B_i$, \[B_i\subset B(y,\tau \rho(y)).\]
	To see this, fix the point $y$ and take an arbitrary point  $z\in B_i$. Then, by Lemma~\ref{wwe}, \[|y-z|\leq |y-x_i|+|x_i-z|\leq 2r_i\leq\frac{2}{M}d(B_i,\partial\Omega)\leq\frac{2}{M}\rho (y)<\tau\rho (y).\]
	
	For $x\in\Omega$, denote by $B_x$ the ball $B(x,\frac{\rho(x)}{4M})$ and set
	\[\mathscr C=\{x\in A: u_{B_x}\geq\frac{1}{2}\},\ \mathscr D=\{x\in A: u_{B_x}<\frac{1}{2}\}.\]
	Then, we have
	\begin{equation}\label{aaa}
		\int_A {\rho(x)}^adx=\int_\mathscr C{\rho(x)}^a+\int_\mathscr D{\rho(x)}^adx.
	\end{equation}
	
	For any $x\in \mathscr C$, we have taken a chain of balls $B_0$,$B_1$,...,$B_k$. According to Lemma~\ref{wwe},
	\begin{equation}\label{poi}
		\frac{1}{2}\leq|u_{B_k}-u_{B_0}|\leq\sum_{i=0}^{k-1}(|u_{B_i}-u_{B_i\cap B_{i+1}}|+|u_{B_{i+1}}-u_{B_i\cap B_{i+1}}|)\leq C \sum_{i=0}^{k}\frac{1}{|B_i|}\int_{B_i}|u-u_{B_i}|dy.
	\end{equation}
	Then, we fix a ball $B_i$ and apply H\"older's inequality, \[\frac{1}{|B_i|} \int_{B_i} |u(y)-u_{B_i}|dy\leq\frac{1}{|B_i|} \int_{B_i} {\Big(\frac{1}{|B_i|} \int_{B_i}{|u(y)-u(z)|}^{p}dz\Big)}^{\frac{1}{p}}dy.\]
	As $|B_i|\geq C {|y-z|}^n$ for all $y,z\in B_i$, \[\frac{1}{|B_i|} \int_{B_i} |u(y)-u_{B_i}|dy\leq C {|B_i|}^{\frac{\delta}{n}-1} \int_{B_i} {\Big( \int_{B_i}\frac{{|u(y)-u(z)|}^{p}}{{|y-z|}^{n+p\delta}}dz\Big)}^{\frac{1}{p}}dy.\]
	We may add the above euqations to obtain
	\[\sum_{i=0}^{k}\frac{1}{|B_i|}\int_{B_i}|u-u_{B_i}|dy\leq C\sum_{i=0}^{k}{|B_i|}^{\frac{\delta}{n}-1}\int_{B_i}{{\Big({\int_{B_i} \frac{{|u(y)-u(z)|}^p}{{|y-z|}^{n+p\delta}}dz}\Big)}^{\frac{1}{p}}dy}.\]
	Since $B_i\subset B(y,\tau\rho(y))\cap\Omega$,
	\[\begin{aligned}\sum_{i=0}^{k}\frac{1}{|B_i|}\int_{B_i}|u-u_{B_i}|dy&\leq C\sum_{i=0}^{k}{|B_i|}^{\frac{\delta}{n}-1}\int_{B_i}{{\Big({\int_{B(y,\tau \rho(y))\cap \Omega} \frac{{|u(y)-u(z)|}^p}{{|y-z|}^{n+p\delta}}dz}\Big)}^{\frac{1}{p}}dy}\\&\leq C \sum_{i=0}^k r_i^{\delta-\frac{n}{p}}{\Big(\int_{B_i} g_u(y)dy\Big)}^\frac{1}{p}.\end{aligned}\]
	 Combining this with \eqref{poi}, we arrive at
	\begin{equation}\label{eq1}
		C\leq \sum_{i=0}^k r_i^{\delta-\frac{n}{p}}{\Big(\int_{B_i} g_u(y)dy\Big)}^\frac{1}{p}.
	\end{equation}
	Write $\kappa =\frac{(s-1)(p-1)+\Delta}{sp}$. Applying Holder's inequality, we conclude
	\[C\leq {\Big(\sum_{i=0}^k r_i^{\frac{\kappa p}{p-1}}\Big)}^{\frac{p-1}{p}}{\Big(\sum_{i=0}^k r_i^{p(-\kappa +\delta-\frac{n}{p})}\int_{B_i} g_u(y)dy\Big)}^\frac{1}{p}.\] Lemma~\ref{wwe} implies that when $s>1$
	\[\sum_{i=0}^k r_i^{\frac{\kappa p}{p-1}}\leq\sum_{i=0}^{\infty}{(2^{-i})}^{\frac{\kappa p}{p-1}}2^{\frac{i(s-1)}{s}}<C,\]
    and we could obtain the same conclusion for the easier case $s=1$.
	Combining the above two inequalities, we infer that
	\begin{equation}\label{pop}
		\sum_{i=0}^k r_i^{p(-\kappa +\delta-\frac{n}{p})}\int_{B_i} g_u(y)dy\geq C.
	\end{equation}
	By Lemma~\ref{wwe}, for any $0\leq i\leq k-1$ and $y\in B_i$, $C r_i\geq {|x-y|}^s$. Note that $-\kappa p+\delta p-n-b<0$. This implies that for any $y\in B_i$,
	\[{r_i}^{-\kappa p+\delta p-n-b}\leq C{|x-y|}^{s(-\kappa p+\delta p-n-b)}.\]
	Applying Lemma~\ref{wwe},
	\[{r_i}^{-\kappa p+\delta p-n}\leq C{\rho(y)}^b{|x-y|}^{s(-\kappa p+\delta p-n-b)}.\]
	As for $y\in B_i\cap (2^{j+1}B_k\backslash 2^jB_k)$, \[|x-y|\asymp 2^jr_k,\]where $0\leq j\leq |{\log}_2{r_k}|$. Since $d(\Omega)=1$, we only need to consider such $j$. For such $y$, \[{r_i}^{-\kappa p+\delta p-n}\leq C{\rho(y)}^b{(2^jr_k)}^{s(-\kappa p+\delta p-n-b)}.\]
	Since $d(\Omega)=1$, ${r_k}^{-\kappa p+\delta p-n-b}\geq 1$. Moreover, as Lemma~\ref{wwe} implies, for $y\in B_k$, \[{r_k}^{-\kappa p+\delta p-n}\asymp {r_k}^{-\kappa p+\delta p-n-b}{\rho(y)}^b\leq {r_k}^{s(-\kappa p+\delta p-n-b)}{\rho(y)}^b.\]
	Thus, the above two equations, the conditions within Lemma~\ref{wwe} and \eqref{pop} give us that \[\begin{aligned}C& \leq\sum_{i=0}^k {r_i}^{p(-\kappa+\delta-\frac{n}{p})} \int_{B_i}g_u(y)dy \\& \leq C {r_k}^{s(-\kappa p+\delta p-n-b)}\int_{B_k}g_u(y){\rho(y)}^bdy+C \sum_{j=0}^{|\log_2r_k|}{(2^jr_k)}^{s(-\kappa p+\delta p-n-b)}\int_{\Omega\cap (2^{j+1}B_k\backslash 2^jB_k)}g_u(y){\rho(y)}^{b}dy\\& \leq C\sum_{l=0}^{|\log_2r_k|+1}{(2^lr_k)}^{s(-\kappa p+\delta p-n-b)}\int_{2^lB_k\cap \Omega}g_u(y){\rho(y)}^{b}dy. \end{aligned} \]
	Since $\Delta>0$, we have \[\sum_{l=0}^{|\log_2r_k|+1}{(2^lr_k)}^{\Delta}\leq {r_k}^{\Delta}\sum_{l=-\infty}^{|\log_2 r_k|+1}2^{l\Delta}\leq {r_k}^{\Delta}2^{(|\log_2r_k|+1)\Delta}<C.\]
	Comparing the above two inequalities, there exists an $l\in[0,|\log_2r_k|+1]$ such that \begin{equation}\label{eq3t}{(2^lr_k)}^{\Delta}<C{(2^lr_k)}^{s(-\kappa p+\delta p-n-b)}\int_{2^lB_k\cap \Omega}g_u(y){\rho(y)}^{b}dy.\end{equation}
	In other words, there is an $R_x\geq r_k\geq C\rho(x)$, satisfying \[C{R_x}^{s(\kappa p-\delta p+n+b)+\Delta}\leq\int_{B(x,R_x)\cap \Omega}g_u(y){\rho(y)}^{b}dy.\]
	Using Lemma~\ref{ppp} for the family $\{B(x,R_x)\}_{x\in \mathscr C}$ covering $\mathscr C$, we obtain a sequence of disjoint balls $B_{(1)}$, $B_{(2)}$,$B_{(3)}$,... so that $\mathscr C\subset \cup_{i=1}^{\infty}5B_{(i)}$. Denote the radius of $B_{(i)}$ by $r_{(i)}$. For $y\in B(x,R_x)\cap\Omega$, it is easy to observe that $\rho(y)\leq C R_x$, from which it follows ${\rho(y)}^a\leq C{R_x}^a$, where we used the condition $a\geq 0$. Therefore, we may estimate as follows:
	\begin{equation}\label{qqq}
		\begin{aligned}\int_{\mathscr C}{\rho(x)}^adx&\leq \sum_{i=1}^{\infty}\int_{5B_{(i)}\cap\Omega} {\rho(x)}^adx\\&\leq C\sum_{i=1}^{\infty} r_{(i)}^{n+a}\\&\leq C \sum_{i=1}^{\infty}{(\int_{B_{(i)}\cap \Omega}g_u(y){\rho(y)}^{b}dy)}^{\frac{n+a}{s(n+b+\kappa p-\delta p)+\Delta}}\\&\leq C {(\sum_{i=1}^{\infty}\int_{B_{(i)}\cap \Omega}g_u(y){\rho(y)}^{b}dy)}^{\frac{n+a}{s(n+b+\kappa p-\delta p)+\Delta}}.\end{aligned}\end{equation}
	In the last inequality, we have applied the fact that \[\frac{n+a}{s(n+b+\kappa p-\delta p)+\Delta}=\frac{n+a}{s(n+b-\delta p)+(s-1)(p-1)+2\Delta}\geq 1.\]
Therefore, we get the desired estimate of the first term of \eqref{aaa}, that is, \begin{equation}\label{sas}
		\int_{\mathscr C}{\rho(x)}^adx\leq C {\Big(\int_{\Omega}g_u(y){\rho(y)}^{b}dy\Big)}^{\frac{q}{p}},
	\end{equation}
	which follows from \eqref{qqq} and the above equation.
	
	Next, we are going to estimate the second part of \eqref{aaa}. Remember that, for $x\in\Omega$,  $B_x=B(x,\frac{\rho(x)}{4M}).$ Then $\mathscr D\subset\cup_{x\in \mathscr D}B_x.$ Moreover, by Lemma~\ref{sss}, there exists a sequence of balls $B_{x_1}$,$B_{x_2}$,$B_{x_3}$,... such that $\mathscr D\subset \cup_{i=1}^{\infty} B_{x_i}$. Since $u_{B_{x_i}}\leq \frac{1}{2}$ and $u|_A\geq 1$, for $y\in B_{x_i}\cap A$, \[{|u(y)-u_{B_{x_i}}|}^q\geq\frac{1}{2^q},\]
	for all $i\geq 1$.
	By Lemma~\ref{ddd} and the above relationship, we obtain that \[|A\cap B_{x_i}|\leq C\int_{A\cap B_{x_i}}{|u-u_{B_{x_i}}|}^q\leq Cr_{x_i}^{n+\frac{q}{p}(p\delta-n)}{\Big(\int_{B_{x_i}}g_u(y)dy\Big)}^{\frac{q}{p}}.\]
	Thus, we have
	\begin{equation}\label{xxx}\begin{aligned}\int_\mathscr D{\rho(y)}^ady&\leq \sum_{i=1}^{\infty}\int_{B_{x_i}\cap A}{\rho(y)}^ady\\&\leq C\sum_{i=1}^{\infty}r_{x_i}^a|A\cap B_{x_i}|\\&\leq C\sum_{i=1}^{\infty}r_{x_i}^{a+n+\frac{q}{p}(p\delta-n)}{\Big(\int_{B_{x_i}}g_u(y)dy\Big)}^{\frac{q}{p}}
			\\&\leq C {\Big(\int_{\Omega}{\rho(y)}^{[a+n+\frac{q}{p}(p\delta-n)]\frac{p}{q}}g_u(y)dy\Big)}^{\frac{q}{p}}. \end{aligned}
	\end{equation}
	Then, by the condition of the main theorem, it easily follows that \[n+a\geq \frac{q}{p}(b+n-p\delta),\]
	from which we obtain that, \[[a+n+\frac{q}{p}(p\delta-n)]\frac{p}{q}\geq b.\]
	Since $d(\Omega)=1$, we obtain by \eqref{xxx} that \[\int_\mathscr D{\rho(y)}^ady\leq C {\Big(\int_{\Omega}{\rho(y)}^{b}g_u(y)dy\Big)}^{\frac{q}{p}}. \]
	Combining the above equation with \eqref{sas}, we complete the whole proof.
\end{proof}

A straightforward consequence of Theorme \ref{1.1t} is the following weighted fractional Sobolev-Poincar\'e  inequality in $s$-John domains.
\begin{thm}\label{4.2t}
Let $\Omega$ be an $s$-John domain. Assume that $0<\delta<1$, $\tau>0$, $1\leq p\leq q$, $a\geq 0$ and $b>p\delta-n$, satisfying that $q<\frac{(n+a)p}{s(n+b-\delta p)+(s-1)(p-1)}$. Then for any ball $B\subset\subset\Omega$, there is a constant $C$, such that, for any $u\in C(\Omega)$ with $u|_B=0$, \[{\Big(\int_\Omega{|u(x)|}^q{\rho(x)}^adx\Big)}^{\frac{1}{q}}\leq C{\Big(\int_\Omega g_u^\tau(x){\rho(x)}^bdx\Big)}^{\frac{1}{p}}.\]
\end{thm}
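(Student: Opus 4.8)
The plan is to obtain Theorem \ref{4.2t} as an immediate consequence of the weighted capacity estimate of Theorem \ref{1.1t} combined with Theorem \ref{4.1l}; no new analytic estimate is required, only a check that the hypotheses line up.

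First I would verify that $f = \rho^a$ and $g = \rho^b$ qualify as admissible weights in the sense of Theorem \ref{4.1l}. Since an $s$-John domain is bounded and $\rho(x) = d(x,\partial\Omega)$ is positive and continuous on $\Omega$, both $\rho^a$ and $\rho^b$ are positive and continuous on $\Omega$; and because $a \geq 0$ and $\rho$ is bounded on the bounded set $\Omega$, we get $\int_\Omega \rho(x)^a\,dx \leq |\Omega|\,(d(\Omega))^a < \infty$. One should also note that the standing assumption $q < \frac{(n+a)p}{s(n+b-\delta p)+(s-1)(p-1)}$ makes sense and in particular forces $q < \infty$: indeed $b > p\delta - n$ and $s \geq 1$ imply that the denominator $s(n+b-\delta p)+(s-1)(p-1)$ is strictly positive.

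Next, fix an arbitrary ball $B \subset\subset \Omega$. Theorem \ref{1.1t}, applied to this $B$ with the parameters $p,q,a,b,\delta,\tau$ of the present statement, produces a constant $C$ such that $\big(\int_A \rho^a\big)^{p/q} \leq C \inf_u \int_\Omega g_u^\tau \rho^b$ for every admissible $A \subset \Omega$ with $A \cap B = \emptyset$, the infimum being taken over $u \in C(\Omega)$ with $u|_A \geq 1$ and $u|_B = 0$. This is verbatim the hypothesis of Theorem \ref{4.1l} with $f = \rho^a$ and $g = \rho^b$, so Theorem \ref{4.1l} (with the same $B$) yields a constant $C$ for which $\big(\int_\Omega |u|^q \rho^a\big)^{1/q} \leq C \big(\int_\Omega g_u^\tau \rho^b\big)^{1/p}$ holds for every $u \in C(\Omega)$ with $u|_B = 0$. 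Since $B$ was arbitrary, this is exactly the assertion of Theorem \ref{4.2t}.

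I do not foresee any real obstacle: all the substantive work --- the chain lemma (Lemma \ref{wwe}) for $s$-John domains, the Vitali and Besicovitch covering arguments (Lemmas \ref{ppp} and \ref{sss}), the fractional Poincar\'e inequality on balls (Lemma \ref{ddd}), and the telescoping over dyadic level sets --- has already been carried out inside Theorems \ref{1.1t} and \ref{4.1l}. The only point demanding a little attention is purely bookkeeping, namely ensuring that the ball $B$ and the normalization and admissibility conditions imposed on $A$ and $u$ in the conclusion of Theorem \ref{1.1t} coincide with those demanded as input by Theorem \ref{4.1l}, which they do.
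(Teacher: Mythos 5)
Your proposal is correct and follows exactly the same route as the paper: the paper's proof of Theorem~\ref{4.2t} is simply ``This follows immediately from Theorem~\ref{1.1t} and Theorem~\ref{4.1l},'' which is precisely the composition you carry out. The extra bookkeeping you supply (integrability of $\rho^a$ on the bounded domain, positivity of the denominator in the exponent condition) is a sensible elaboration of the same one-line argument.
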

\begin{proof}[Proof of Theorem~\ref{4.2t}]
This follows immediately from Theorem~\ref{1.1t} and Theorem~\ref{4.1l}.
\end{proof}

\begin{proof}[Proof of Theorem~\ref{4.4t}]
This is a direct consequence of Theorem~\ref{1.1t} and Theorem~\ref{thm:capactity characterization}.
\end{proof}

\section{Weighted fractional Sobolev-Poincar\'e inequality in $\beta$-H\"older domains}

As in the previous section, we first prove a weighted capacity estimate in $\beta$-H\"older domains.
\begin{thm}\label{1.2t}
	Let $\Omega$ be a $\beta$-H\"older domain ($0<\beta\leq 1$). Assume that $\tau>0$, $0<\delta<1$, $1\leq p\leq q$, $a\geq 0$ and $p\delta-n<b<(a+n)\beta\frac{p}{q}+p\delta-n.$
	Then for any ball $B\subset\subset\Omega$, there is a constant $C(\Omega,a,b,p,q,\delta,\tau,B)$ such that \[{\Big(\int_A\rho(x)^adx\Big)}^{\frac{p}{q}}\leq C\inf \int_\Omega g_u^\tau(x)\rho(x)^bdx,\]
	for any admissible subset $A\subset\Omega$ satisfying that $A\cap B=\emptyset$, where the infimum is over all the functions $u\in C(\Omega)$ with $u|_A\geq 1$ and $u|_B=0$.
\end{thm}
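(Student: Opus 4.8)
The plan is to transplant the proof of Theorem~\ref{1.1t} to the H\"older setting, using the chain $P(Q_x)$ of Whitney-type cubes along a quasihyperbolic geodesic (from a fixed cube $Q_0$ to the cube $Q_x$ containing $x$) in place of the $s$-John chain of Lemma~\ref{wwe}, and using the summability estimate Lemma~\ref{2.7l} in place of Lemma~\ref{wwe}(6). Exactly as in the proof of Theorem~\ref{1.1t} we reduce to the case $B=B_0:=B(x_0,\rho(x_0)/(4M))$, with $x_0$ the center of $Q_0$ and $M>1$ large, and normalize $d(\Omega)=1$; we fix a decomposition $\mathcal V_k(\Omega)$ with $k$ so large that $Q$ and any fixed dilate of it lie in $B(y,\tau\rho(y))$ for all $y\in Q$. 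Given an admissible $A$ with $A\cap B_0=\emptyset$ and $u\in C(\Omega)$ with $u|_A\ge1$, $u|_{B_0}=0$, we write $g_u$ for $g_u^\tau$ and split $A=\mathscr C\cup\mathscr D$ with $\mathscr C=\{x\in A:u_{B_x}\ge\tfrac12\}$ and $\mathscr D=A\setminus\mathscr C$, where $B_x=B(x,\rho(x)/(4M))$, so that $\int_A\rho^a\,dx=\int_{\mathscr C}\rho^a\,dx+\int_{\mathscr D}\rho^a\,dx$.

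The term over $\mathscr D$ is handled verbatim as in Theorem~\ref{1.1t}: a Besicovitch cover of $\mathscr D$ by the balls $B_x$, Lemma~\ref{ddd}, and the bound $|u-u_{B_x}|\ge\tfrac12$ on $A\cap B_x$ give $\int_{\mathscr D}\rho^a\le C\big(\int_\Omega\rho^{\,c}g_u\big)^{q/p}$ with $c=(a+n)\tfrac pq+p\delta-n$; since $\beta\le1$ the hypothesis forces $b<(a+n)\beta\tfrac pq+p\delta-n\le c$, and $\rho\le1$ then yields $\big(\int_{\mathscr D}\rho^a\big)^{p/q}\le C\int_\Omega g_u\rho^b$.

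For $\mathscr C$, fix $x\in\mathscr C$. Telescoping $\tfrac12\le|u_{B_0}-u_{B_x}|$ along the chain $P(Q_x)$ (together with the end links $B_0\leftrightarrow Q_0$ and $Q_x\leftrightarrow B_x$, of comparable size), then applying H\"older's inequality and the fractional Poincar\'e inequality cube by cube exactly as in \eqref{poi}--\eqref{eq1}, gives $C\le\sum_{Q\in P(Q_x)}\ell(Q)^{\delta-n/p}\big(\int_Q g_u\big)^{1/p}$. A H\"older inequality in the $Q$-sum with a small exponent $\kappa>0$ (for $p=1$ take $\kappa=0$ and omit this step), whose free factor $\sum_{Q\in P(Q_x)}\ell(Q)^{\kappa p/(p-1)}$ is bounded by Lemma~\ref{2.7l}, together with the Whitney relation $\int_Q g_u\asymp\ell(Q)^{-b}\int_Q g_u\rho^b$, yields $C\le\sum_{Q\in P(Q_x)}\ell(Q)^{-\Lambda}\int_Q g_u\rho^b$ with $\Lambda:=n+b-\delta p+\kappa p>0$. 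The essential new geometric input, the H\"older analogue of Lemma~\ref{wwe}(2), is that the chain may be taken so that $\ell(Q)\gtrsim\ell(Q_x)$ and $d(x,Q)\le C\ell(Q)^{\beta}$ for all $Q\in P(Q_x)$ (with $d(\Omega)=1$); this follows from the H\"older condition via the standard bound on the Euclidean length of a quasihyperbolic geodesic segment, cf.\ \cite{KOT2002}. Consequently a cube $Q\in P(Q_x)$ of side $\asymp2^{-m}$ lies in $B(x,C2^{-m\beta})$, and grouping the cubes by dyadic side and invoking the bounded overlap of Whitney-type cubes turns the last inequality into $C\le\sum_{m\ge0}R_m^{-\Lambda/\beta}\int_{B(x,R_m)\cap\Omega}g_u\rho^b$ with $R_m\asymp2^{-m\beta}$ ranging down to $\asymp\rho(x)^{\beta}$. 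Since $\sum_{m\ge0}R_m^{\Delta}$ is a convergent geometric series for every $\Delta>0$, a pigeonhole produces a radius $R_x\gtrsim\rho(x)^{\beta}$ with $R_x^{\Lambda/\beta+\Delta}\le C\int_{B(x,R_x)\cap\Omega}g_u\rho^b$.

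At this point the full strength of the hypothesis $b<(a+n)\beta\tfrac pq+p\delta-n$ enters: it lets us pick $\kappa,\Delta>0$ so small that $\Lambda/\beta+\Delta=\tfrac1\beta(n+b-\delta p+\kappa p)+\Delta=(n+a)\tfrac pq$, because the left side tends to $\tfrac1\beta(n+b-\delta p)<(n+a)\tfrac pq$ as $\kappa,\Delta\to0^+$. Applying Lemma~\ref{ppp} to $\{B(x,R_x)\}_{x\in\mathscr C}$ we obtain disjoint balls $B_{(i)}$ with $\mathscr C\subset\bigcup_i 5B_{(i)}$; since $a\ge0$ and $\rho\le CR_{(i)}$ on $5B_{(i)}\cap\Omega$ (as $R_{(i)}\gtrsim\rho(x_{(i)})^{\beta}$ and $\beta\le1$),
\[
\int_{\mathscr C}\rho^a\le C\sum_i R_{(i)}^{n+a}=C\sum_i\big(R_{(i)}^{(n+a)p/q}\big)^{q/p}\le C\sum_i\Big(\int_{B_{(i)}\cap\Omega}g_u\rho^b\Big)^{q/p},
\]
whence, since $q/p\ge1$ and the $B_{(i)}$ are pairwise disjoint, $\big(\int_{\mathscr C}\rho^a\big)^{p/q}\le C\sum_i\int_{B_{(i)}}g_u\rho^b\le C\int_\Omega g_u\rho^b$. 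Adding the $\mathscr C$- and $\mathscr D$-estimates via $(X+Y)^{p/q}\le X^{p/q}+Y^{p/q}$ completes the proof. The main obstacle is precisely the geometric input $d(x,Q)\le C\ell(Q)^{\beta}$ and arranging that the chain $P(Q_x)$ does not descend below the scale $\ell(Q_x)$ --- this is where the H\"older hypothesis (rather than the weaker $\tfrac1\beta$-John property) is used, and it is what renders the sharp range for $b$ attainable: a crude reduction to Theorem~\ref{1.1t} with $s=\tfrac1\beta$ would only give the strictly narrower range coming from the extra summand $(\tfrac1\beta-1)(p-1)$ in that denominator.
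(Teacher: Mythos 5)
Your proposal follows essentially the same path as the paper's proof: split $A$ into $\mathscr C$ and $\mathscr D$, telescope along the Whitney chain $P(Q_x)$ coming from the quasihyperbolic geodesic, apply H\"older with the summability Lemma~\ref{2.7l}, use the $\frac1\beta$-John geometry to convert cube-scales into distances from $x$, pigeonhole a good radius, and finish with Vitali for $\mathscr C$ and Besicovitch plus Lemma~\ref{ddd} for $\mathscr D$. The paper organizes the scale pigeonhole slightly differently (dyadic annuli $2^jQ_k$ around $Q_k$ rather than grouping cubes by dyadic side-length), and parameterizes the H\"older split with $\varepsilon(p-1)$ instead of your $\kappa p$, but these are cosmetic.

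One assertion in your writeup is stronger than what the geometry actually gives and should be corrected, though it does not sink the argument. You claim that the chain can be taken with $\ell(Q)\gtrsim\ell(Q_x)$ for all $Q\in P(Q_x)$. In general the quasihyperbolic geodesic in a $\beta$-H\"older domain can dip below the scale $\rho(x)$; what the $\tfrac1\beta$-John property actually yields is $|x-w|\le C\rho(w)^{\beta}$ for $w$ on the geodesic, hence (combined with $\rho(w)\ge\rho(x)-|x-w|$) only $\ell(Q)\gtrsim\rho(x)^{1/\beta}$. Correspondingly the balls $B(x,R_m)$ with $R_m\asymp 2^{-m\beta}$ range down to $\asymp(\rho(x)^{1/\beta})^{\beta}=\rho(x)$, not to $\rho(x)^{\beta}$. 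Fortunately this is all you need: the Vitali step requires $R_x\gtrsim\rho(x)$ so that $\rho\le CR_x$ on $B(x,R_x)\cap\Omega$, and that weaker bound still holds. The paper sidesteps the issue entirely by decomposing into annuli $2^{j+1}Q_k\setminus2^jQ_k$ and using $r_i^{\Delta-b}\le C|x-y|^{(\Delta-b)/\beta}$ directly, which never requires a lower bound on the $r_i$. You may want to either adopt the annular decomposition or replace $\ell(Q)\gtrsim\ell(Q_x)$ by the correct $\ell(Q)\gtrsim\ell(Q_x)^{1/\beta}$ and note that this already gives $R_x\gtrsim\rho(x)$.
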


\begin{proof}[Proof of Theorem~\ref{1.2t}]
Our assumption implies that, there exists a constant $\varepsilon>0$, such that \[(a+n)\frac{p}{q}\beta>b-\delta p+n+\varepsilon(p-1)>0.\]
Moreover, there is a $t>0$ such that \[\frac{a+n}{t+\frac{b-p\delta+n+\varepsilon(p-1)}{\beta}}=\frac{q}{p}.\]
Fix an arbitrary ball $B\subset\subset\Omega$. For any positive integer $m$, applying Lemma~\ref{lemma:Whitney} for $\Omega$, we obtain that there exists a Whitney-type decomposition $\mathcal{V}_m(\Omega)$. Taking $m$ large enough, we may assume that there is a cube $Q_0$ of a Whitney-type decomposition $\mathcal{V}_m(\Omega)$ such that $Q_0\subset B$. By taking $m$ even larger, we suppose that $m>\frac{6}{5\tau}+\frac{1}{10}$. Furthermore, let the center of $Q_0$ be $x_0$ and, without loss of generality, $d(\Omega)=1$. It suffice to prove that there is a constant $C$ such that, for any admissible set $A\subset\Omega$ with $A\cap Q_0=\emptyset$, it holds that
\[{\Big(\int_A{\rho(x)}^adx\Big)}^{\frac{p}{q}}\leq C\inf\int_\Omega g_u^\tau(x){\rho(x)}^bdx,\]
where the infimum is taken with respect to all the functions $u\in C(\Omega)$ with $u|_A\geq 1$ and $u|_{Q_0}=0$. For $x\in\Omega \backslash(\cup_{Q\in\mathcal{V}_k}\partial Q)$, define $Q(x)$ as the unique Whitney cube such that $x\in Q(x)$. Since $|\cup_{Q\in\mathcal{V}_k(\Omega)}\partial Q|=0$, we would always ignore this set in the following.  Let \[\mathcal{C}=\{x\in A:u_{Q(x)}\geq \frac{1}{2}\},\] \[\mathcal{D}=\{x\in A:u_{Q(x)}>\frac{1}{2}\}.\]
For any $x\in\mathcal{C}$, choose a quasihyperbolic geodesic $\gamma$, joining $x_0$ and $x$. Denote the Whitney-type cubes meeting $\gamma$ by $Q_0$,$Q_1$,...,$Q_k$ with center $x_0$,$x_1$,...,$x_k$ such that $x\in Q_k$ and
\[\overline{Q_i}\cap\overline{ Q_{i+1}}\neq\emptyset,\]
for $0\leq i\leq k-1$. Then, define $\frac{6}{5}Q_1$,$\frac{6}{5}Q_2$,...,$\frac{6}{5}Q_{k-1}$ as $Q_1^{\prime}$,$Q_2^{\prime}$,...,$Q_{k-1}^{\prime}$ with $Q_0^{\prime}=Q_0$ and $Q_k^{\prime}=Q_k$. Thus, for any integer $0\leq i\leq k$, $y\in Q_i^{\prime}$ and $z\in Q_i^{\prime}$, it holds that \[|y-z|\leq |y-x_i|+|z-x_i|\leq d(Q_i^{\prime})\leq \frac{6}{5}d(Q_i),\]
and that \[\rho(y)\geq d(Q_i^{\prime},\partial\Omega)\geq d(Q_i,\partial\Omega)-\frac{1}{10}d(Q_i)\geq (m-\frac{1}{10})d(Q_i).\]
Thus, \[|y-z|\leq \frac{\frac{6}{5}}{m-\frac{1}{10}}\rho(y)<\tau \rho(y).\]
In other words, \[Q_i^{\prime}\subset B(y,\tau\rho(y)),\]
for any $y\in Q_i^{\prime}.$
Furthermore, it is easy to prove that if $Q_1$ and $Q_2$ are two Whitney-type cubes such that $\overline{Q_1}$ and $\overline{Q_2}$ have nonempty intersection, then their Euclidean diameters are comparable. Therefore, there is a constant $C$ such that \[|Q_i^{\prime}\cap Q_{i-1}^{\prime}|\geq \frac{1}{C}|Q_i^{\prime}\cup Q_{i-1}^{\prime}|\] for any $1\leq i\leq k.$
Thus, by the similar method as \eqref{eq1}, we have the following equation:
\[C\leq \sum_{i=0}^k r_i^{\delta-\frac{n}{p}}{\Big(\int_{Q_i^{\prime}} g_u(y)dy\Big)}^\frac{1}{p},\]
where $r_i$ denotes the length of the cube $Q_i^{\prime}$ as to any $0\leq i\leq k$.
Then, by Lemma~\ref{2.7l} and H\"older's inequality, we obtain that for $\varepsilon>0$, \begin{equation}\label{eq2t}
\begin{aligned}C&\leq {\Big(\sum_{i=0}^{k}r_i^{\varepsilon}\Big)}^{\frac{p-1}{p}}{\Big(\sum_{i=0}^{k}r_i^{\Delta}\int_{Q_i^{\prime}}g_u(y)dy\Big)}^{\frac{1}{p}} \\& \leq C {\Big(\sum_{i=0}^{k}r_i^{\Delta}\int_{Q_i^{\prime}}g_u(y)dy\Big)}^{\frac{1}{p}},\end{aligned}\end{equation}
where $\Delta=p\delta-n-\varepsilon(p-1).$
Remember that, for the quasihyberpolic geodesic $\gamma$, which connects $x_0$ to $x$, it holds that, for any point $w$ in $\gamma$, it holds that \[{|x-w|}^{\frac{1}{\beta}}\leq C d(w,\partial \Omega),\]
which comes from the $\frac{1}{\beta}$-John property of the $\beta$-H\"older domain $\Omega$ (see \cite{KOT2002}). Furthermore, by the property of the Whitney-type decomposition, for any $w\in Q_i^{\prime}$, $\rho(w)\asymp r_i$. Thus, for any $0\leq i\leq k-1$ and any  $y\in Q_i^{\prime}$,
\[|x-y|\leq |x-\xi_i|+|\xi_i-y|\leq C{\rho(\xi_i)}^{\beta}+Cr_i\leq Cr_i^{\beta},\]
for some $\xi_i\in \gamma\cap Q_i^{\prime}$.
Assume that $\kappa<b$. Then, with respect to any $y\in Q_i^{\prime}$($0\leq i\leq k-1$), \[r_i^{\kappa-b}\leq C{|x-y|}^{\frac{\kappa-b}{\beta}}.\]
Thus, we arrive at that
\[r_i^\kappa\leq C{\rho(y)}^b{|x-y|}^{\frac{\kappa-b}{\beta}}.\]
For any $y\in Q_i\cap(2^{j+1}Q_k^{\prime}\backslash 2^jQ_k^{\prime}),$ \[|x-y|\asymp 2^j r_k,\] as to $0\leq j \leq |\log_2r_k|+1$.
As $d(\Omega)=1$, it suffices to consider \[0\leq j \leq |\log_2r_k|+1.\]
Therefore, as to $0\leq i\leq k-1$, \[r_k^\kappa\leq {C\rho(y)}^b{(2^jr_k)}^{\frac{\kappa-b}{\beta}}.\]
For $y\in Q_k^{\prime}$, $r_k^{\kappa-b}\geq 1.$
Thus, \[r_k^\kappa\leq Cr_k^{\kappa-b}\rho(y)^b\leq C r_k^{\frac{\kappa-b}{\beta}}\rho(y)^b.\]
The condition of the theorem tells us that $\Delta<b$. Thus, we may assume that $\kappa=\Delta$. Thus, by \eqref{eq2t} and the fact that there is no point in $\rr^n$ belongs to more than $C$ cubes with respect to $Q_0^{\prime}$,$Q_1^{\prime}$,...,$Q_k^{\prime}$, where the constant $C$ is independent of the choice of $x$, we obtain that
\[\begin{aligned}C&\leq \sum_{i=0}^kr_i^{\Delta}\int_{Q_i^{\prime}}g_u(y)dy \\&\leq C r_k^{\frac{\kappa-b}{\beta}}\int_{Q_k}{\rho(y)}^bg_u(y)dy+C\sum_{j=0}^{|\log_2r_k|+1}{(2^jr_k)}^{\frac{\kappa-b}{\beta}} \int_{\Omega\cap(2^{j+1}Q_k\backslash2^jQ_k)}g_u(y){\rho(y)}^bdy \\&\leq C\sum_{l=0}^{|\log_2r_k|+2}{(2^lr_k)}^{\frac{\kappa-b}{\beta}} \int_{\Omega\cap2^lQ_k}g_u(y){\rho(y)}^bdy \end{aligned}\]
By a argument similar with \eqref{eq3t}, for $t>0$, there is an $l\in[0,|\log_2r_k|+2]$ such that \[{(2^lr_k)}^t\leq C{(2^lr_k)}^{\frac{\Delta-b}{\beta}}\int_{\Omega\cap2^lQ_k}g_u(y)\rho(y)^bdy.\]
In other words, there is an $R_x>r_k\geq C\rho(x)$ such that \[CR_x^{-\frac{\Delta-b}{\beta}+t}\leq \int_{B(x,R_x)\cap\Omega}g_u(y){\rho(y)}^bdy.\]
For $y\in 5B(x,R_x)\cap\Omega$, $\rho(y)\leq \rho(x)+5R_x\leq CR_x.$ As $a\geq 0$, ${\rho(y)}^a\leq CR_x^a.$ By a similar process as in \eqref{qqq}, we obtain that \[\int_{\mathcal{C}}\rho(x)^adx\leq C{\Big(\int_{\Omega}g_u(y){\rho(y)}^bdy\Big)}^{\frac{q}{p}}.\]
As to $\mathcal{D}$, we can cover it up to a set with Lebesgue measure zero by a sequence of cubes $Q_1$,$Q_2$,$Q_3$,... that belongs to $\mathcal{V}_m(\Omega)$ with some point $x_i\in Q_i\cap\mathcal{D}$ for any $i\geq 1$. Thus, $u_{Q_i}<\frac{1}{2}.$ Therefore, by Lemma~\ref{ddd},  \[\begin{aligned}\int_{\mathcal{D}}{\rho(y)}^ady &\leq \sum_{i=0}^{\infty}\int_{Q_i\cap A}{\rho(y)}^ady \\& \leq C\sum_{i=0}^{\infty} {\ell (Q_i)}^a|Q_i\cap A| \\& \leq \sum_{i=0}^{\infty} C{\ell(Q_i)}^a\int_{Q_i}{|u-u_{Q_i}|}^q \\& \leq C\sum_{i=1}^{\infty}{\ell(Q_i)}^{a+n+\frac{q}{p}(p\delta-n)}{\Big(\int_{Q_i}g_u\Big)}^{\frac{q}{p}} \\& \leq C {(\int_{\Omega}g_u\rho^{[a+n+\frac{q}{p}(p\delta-n)]{\frac{p}{q}}})}^{\frac{q}{p}}
\\& \leq C {\Big(\int_{\Omega}g_u\rho^{b}\Big)}^{\frac{q}{p}}.\end{aligned}\]
Combining the above two inequalities, we finish the proof.

\end{proof}

Then we have an analogue of Theorem~\ref{4.2t}.
\begin{thm}\label{t4.2}
	Let $\Omega$ be a $\beta$-H\"older domain ($0<\beta\leq 1$). Assume that $0<\delta<1$, $\tau>0$, $1\leq p\leq q$, $a\geq 0$ and $p\delta-n<b<(a+n)\beta\frac{p}{q}+p\delta-n.$
	Then for any ball $B\subset\subset\Omega$, there is a constant $C$, such that, for any $u\in C(\Omega)$ with $u|_B=0$, \[{\Big(\int_\Omega{|u(x)|}^q{\rho(x)}^adx\Big)}^{\frac{1}{q}}\leq C{\Big(\int_\Omega g_u^\tau(x){\rho(x)}^bdx\Big)}^{\frac{1}{p}}.\]
\end{thm}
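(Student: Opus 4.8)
The plan is to obtain Theorem~\ref{t4.2} as an immediate combination of the weighted capacity estimate in $\beta$-H\"older domains (Theorem~\ref{1.2t}) with the abstract capacity-to-inequality implication already proved in Section~3 (Theorem~\ref{4.1l}), in exact parallel with the way Theorem~\ref{4.2t} is deduced from Theorems~\ref{1.1t} and \ref{4.1l}.

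First I would specialize the general weights to $f=\rho^a$ and $g=\rho^b$. These are positive and continuous on $\Omega$, and since $\Omega$ is bounded and $a\ge 0$ we have $\int_\Omega \rho(x)^a\,dx<\infty$, so the standing hypotheses on $(f,g)$ in Theorem~\ref{4.1l} are met. Under the assumptions of Theorem~\ref{t4.2}, namely $0<\delta<1$, $1\le p\le q$, $a\ge 0$ and $p\delta-n<b<(a+n)\beta\tfrac pq+p\delta-n$, Theorem~\ref{1.2t} applies verbatim and yields: for every ball $B\subset\subset\Omega$ there is a constant $C=C(\Omega,a,b,p,q,\delta,\tau,B)$ with
\[
\Big(\int_A\rho(x)^a\,dx\Big)^{\frac pq}\le C\,\inf_u\int_\Omega g_u^\tau(x)\rho(x)^b\,dx,
\]
the infimum over all $u\in C(\Omega)$ with $u|_A\ge 1$ and $u|_B=0$, for every admissible $A\subset\Omega$ with $A\cap B=\emptyset$. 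This is precisely the hypothesis of Theorem~\ref{4.1l} with $f=\rho^a$, $g=\rho^b$.

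Then I would invoke Theorem~\ref{4.1l} directly: it converts the displayed capacity estimate into the conclusion that for every ball $B\subset\subset\Omega$ there is a constant $C$ so that
\[
\Big(\int_\Omega |u(x)|^q\rho(x)^a\,dx\Big)^{\frac1q}\le C\Big(\int_\Omega g_u^\tau(x)\rho(x)^b\,dx\Big)^{\frac1p}
\]
for every $u\in C(\Omega)$ with $u|_B=0$, which is exactly the assertion of Theorem~\ref{t4.2}. There is no genuine obstacle at this stage: the real work has already been carried out in Theorem~\ref{1.2t} (the chain/shadow argument using Lemmas~\ref{lemma:Whitney} and \ref{2.7l}, the $\tfrac1\beta$-John property, and the Vitali/Besicovitch coverings), and the only points to verify here are the compatibility of the parameter ranges with those of Theorems~\ref{1.2t} and \ref{4.1l} and the finiteness $\int_\Omega\rho^a<\infty$, both of which are immediate.
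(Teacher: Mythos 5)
Your proposal is exactly the paper's proof: Theorem~\ref{t4.2} is deduced by applying Theorem~\ref{4.1l} with $f=\rho^a$, $g=\rho^b$, using Theorem~\ref{1.2t} to supply the required weighted capacity estimate. The extra remarks on continuity, positivity, and finiteness of $\int_\Omega\rho^a\,dx$ are correct sanity checks but add nothing beyond what the paper leaves implicit.
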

\begin{proof}[Proof of Theorem~\ref{t4.2}]
The conclusion follows directly from Theorem~\ref{4.1l} and Theorem~\ref{1.2t}.
\end{proof}
\begin{proof}[Proof of Theorem~\ref{t4.4}]
It follows from Theorem~\ref{thm:capactity characterization} and Theorem~\ref{1.2t}.
\end{proof}

\medskip
\textbf{Acknowledgements}. The author would like to express his gratitude to Prof.~Chang-Yu Guo for his interest in this work and for many useful discussions during the preparation of this work.
He is also willing to thank his supervisor Prof.~Jin-Song Liu for his comments and for many  thoughtful suggestions.

\end{document}